\theoremstyle{plain}
\newtheorem{theorem}{Theorem}[section]
\newtheorem{lemma}[theorem]{Lemma}
\theoremstyle{theorem}
\newtheorem{remark}{Remark}[section]
\newtheorem{assumption}{Assumption}[section]
\def\cQ{{\mathcal Q}}
\def\CC{{\mathbb C}}
\def\cS{\mathcal S}
\def\cA{\mathcal A}
\def\beq#1\eeq{\begin{equation} #1 \end{equation}}
\def\bal#1\eal{\begin{aligned} #1 \end{aligned}}
\def\Forall{\qquad \hbox{for all }}
\def\blue#1{#1}
\newcommand{\vh}{\mathbb V_h}
\newcommand{\cT}{\mathcal T}
\newcommand{\HH}{\mathbb H}
\newcommand{\VV}{\mathbb V}
\newcommand{\RR}{\mathbb R}
\newcommand{\RRD}{\mathbb{R}^d}
\renewcommand{\Re}{\mathfrak{Re}}
\renewcommand{\Im}{\mathfrak{Im}}
\begin{document}

\title[Sinc Approximations of Fractional Powers]
{On Sinc Quadrature Approximations of Fractional Powers of Regularly Accretive Operators}

\author[A.~Bonito]{Andrea Bonito}
\address[A.~Bonito]{Department of Mathematics, Texas A\&M University, College Station, TX 77843, USA}
\thanks{AB is partially supported by NSF grant DMS-1254618.}
\email{bonito@math.tamu.edu}

\author[W.~Lei]{Wenyu Lei}
\address[W.~Lei]{Department of Mathematics, Texas A\&M University, College Station, TX 77843, USA}
\thanks{WL is partially supported by NSF grant DMS-1254618.}
\email{wenyu@math.tamu.edu}

\author[J.~Pasciak]{Joseph E. Pasciak}
\address[J.~Pasciak]{Department of Mathematics, Texas A\&M University, College Station, TX 77843, USA}
\email{pasciak@math.tamu.edu}

\date{\today}

\begin{abstract} 
We consider the finite element approximation of fractional powers of regularly accretive operators via the Dunford-Taylor integral approach. We use a sinc quadrature scheme to approximate the Balakrishnan representation of the negative powers of the operator as well as its finite element approximation. We improve the exponentially convergent error estimates from [A. Bonito, J. E. Pasciak, IMA J. Numer. Anal. (2016) 00, 1-29] by reducing the regularity required on the data. Numerical experiments illustrating the new theory are provided.
\end{abstract} 

\subjclass{65N30, 35S15, 65N15, 65R20, 65N12}

\maketitle

\section{Introduction.}
Let $Y\subset X$  be complex valued Hilbert spaces with $Y$ dense in $X$. 
Let $A(\cdot,\cdot)$ be a bounded and coercive sesquilinear form on
$Y$.   Following \cite{kato1961} (see, Section~\ref{s:reg-acc}) there is
a uniquely defined unbounded operator on $X$ denoted by $A$ with domain
$D(A)\subset Y$.   The fractional powers of $A$ are given by the
Balakrishnan
 integral
 \cite{balakrishnan,kato1961} defined for $\beta \in (0,1)$ and $f\in X$:
\begin{equation}\label{e:negative-power}
	u=A^{-\beta}f = \frac{\sin(\pi \beta)}{\pi} \int_0^\infty \mu^{-\beta} (\mu I+ A)^{-1}f\, d\mu .
\end{equation}
After the  change of variable $\mu = e^y$, we approximate the resulting integral using a truncated equally spaced quadrature (sinc quadrature \cite{lundbowers})
given by 
\beq
u_{k} := \cQ^{-\beta}_k(A) f := \frac{k\sin(\pi \beta)}{\pi}      \sum_{\ell=-M}^N e^{(1-\beta) y_\ell} (e^{y_\ell}I+  A)^{-1} f,
\label{sinc-abs}
\eeq
where $k>0$ is a real number and $N$ and $M$ are positive integers
chosen to be  on the order of $1/k^2$ (see Remark~\ref{rem:MN}).

The fractional powers $A^s$ (with domain $D(A^s)$) 
are well defined for $s\ge 0$, see \cite{lunardi}.   It is a consequence of
the coercivity of $A(\cdot,\cdot)$ that the natural norm on $D(A^s)$,
namely $(\|v\|_X^2+\|A^s v\|_X)^{1/2}$ is
equivalent to the norm 
$$\|v\|_{D(A^s)}:= \|A^s v \|_X.$$

The first result of this note, Theorem~\ref{l:exp} shows that the
quadrature error goes exponentially to zero when measured
in a scale of the above norms.
Specifically,  for any
$s\in [0,\beta)$ and $t\ge 0$,
\beq
\| u - \cQ^\beta_k(A) f \|_{D(A^{s+t})} \le  C \,e^{-c/k}  \|f
\|_{D(A^t) }
\label{new}
\eeq
with $C$ and $c$ not depending on $k$.  
The result of this paper requires less regularity on $f$ than 
 that of \cite{BP16} which is essentially of the form (for $s\ge 0$) 
\beq\| (u - \cQ^\beta_k(A) f) \|_{D(A^s)} \le C\, e^{-c/k}  \|
f\|_{D(A^{s}) }.
\label{old}
\eeq

In the remainder of the paper, we focus on \blue{fractional problems
  with   homogeneous 
Dirichlet boundary conditions.  We avoid the non-homogeneous case (see 
 \cite{APR17} for 
a discussion of one possible definition for fractional problems
with 
non-homogeneous Dirichlet and Neumann boundary conditions)}.  
Let $\Omega\subset \RRD$ be a bounded domain with Lipschitz boundary
$\Gamma:=\partial \Omega$.  We assume that there are two 
 open sets (with respect to Lebesgue measure in
$\RR^{d-1}$)
$\Gamma_D$ (Dirichlet) and $\Gamma_N$ (Neumann) such that $\Gamma_D\cap \Gamma_N = \emptyset$ and
$\overline{\Gamma_D\cup \Gamma_N} =\Gamma$. We additionally assume that
$\Gamma_D\neq \emptyset$.
We define the complexed valued functional space
$$\VV := \{v\in H^1(\Omega): v=0 \text{ on }\Gamma_D\} \subset H^1(\Omega)$$
and set
\begin{equation}\label{example_space}
X:=L^2(\Omega), \qquad \textrm{and} \qquad Y:=\VV.
\end{equation}
Also for all $u,v \in \VV$ we consider
\begin{equation}\label{example}
	A(u,v):=\int_\Omega \mathbf A \nabla u \cdot \nabla\overline{v}
        + \mathbf b_1\cdot\nabla u~\overline{v} + u ~\mathbf
        b_2\cdot\nabla\overline{v} +c u~\overline{v}\, dx,
\end{equation}
where $\overline{v}$ denotes the complex conjugate of $v$.
The coefficients $\mathbf A\in L^\infty(\Omega,\mathsf{GL}(\RRD))$,
$\mathbf b_1,\mathbf b_2\in L^\infty(\Omega,\RRD)$, $c\in
L^\infty(\Omega)$ are assumed to be such that the form $A(\cdot,\cdot)$
is coercive and bounded on $\VV$.

The numerical  approximation of \eqref{e:negative-power} with
$X,Y,A(\cdot,\cdot)$ and $\VV$ as in  \eqref{example_space}-\eqref{example}.
is defined as follows.   One starts by introducing a finite element
 space $\vh$.   The general framework of Section~\ref{s:reg-acc} is used
 to define the discrete fractional power $A_h^\beta$  for
 $X=Y=\vh$ and $A(\cdot,\cdot)$ as in \eqref{example}.
The (semi-discrete) finite element approximation of $u=A^{-\beta}f$ is then defined by
$u_h= A_h^{-\beta} \pi_h f$ where $\pi_h $ denotes the $L^2(\Omega)$
projection onto $\vh$.  Of course, 
\beq
u_h = A_{h}^{-\beta} \pi_h f =  \frac{\sin(\pi \beta)}{\pi} \int_0^\infty \mu^{-s} (\mu I+ A_h)^{-1}\pi_h f\, d\mu.
\label{discah}
\eeq
The fully discrete approximation is then defined by $u_{h,k}=
\cQ_k^{-\beta} (A_h)\pi_h f$ and our goal is to estimate the error
$u-u_{h,k}$.

We shall provide error estimates in Sobolev norms.  To this end, we
define 
\beq
	\HH^r(\Omega) 
:= \left\{
	\begin{aligned}
		&(L^2(\Omega), H^1_0(\Omega))_{2,r}, &\qquad\text{for } r\in[0,1],\\
		&H^r(\Omega)\cap H^1_0(\Omega), &\qquad\text{for } r\in[1,2],
	\end{aligned}
	\right.
\label{hhdef}
\eeq
        with $(\cdot,\cdot)_{2,r}$ denoting interpolation using the real method.
We assume elliptic regularity with index $\alpha\in (0,1]$ (see,
Assumption~\ref{a:elliptic-regularity}).    When this condition holds, 
\beq
D(A^{s/2})=\HH^s(\Omega),\quad \hbox{ for } s\in [0,1+\alpha],
\label{e:hh-Ar}
\eeq
and their norms are equivalent.
Indeed, for $s\in [0,1)$, this follows from Theorem 3.1 of
\cite{kato1961}, $s=1$ is proved in \cite{agranovich} and $s\in
(1,1+\alpha]$ is given by  Theorem 6.4 of \cite{BP16}.  

We shall not assume
artificial smoothness of the solution $u=A^{-\beta}f$.  Instead, we assume smoothness on
$f$ and use the smoothing properties of the operator $A^{-\beta}$ and \eqref{e:hh-Ar}
to
conclude regularity for $u$.    
Note that for $t\ge 0$, $u$ is in $D(A^{t+\beta}) $ if and only if $f$
is in $D(A^t)$ with equal norms. 
In all cases,  we assume that $f$ is 
 in $X:=L^2(\Omega)$.

        Estimates for the  error between $u$ and $u_h$ in the
        $\HH^r(\Omega)$ norms 
       for $r\in [0,1]$
were provided in
\cite{BP16} and are also discussed in Subsection~\ref{ss:uuhk}.   We shall apply
Theorem~\ref{l:exp} to the derive bounds for the error
$\|u_h-u_{h,k}\|_{\HH^r(\Omega)}$
and obtain (Theorem~\ref{l:expex})
\beq
\|u_h-u_{h,k}\|_{\HH^r(\Omega)}\preceq
e^{-\blue{\pi^2/(2k)}}\|f\|_{\HH^{\max(r+2\alpha^*-2\beta,0)}(\Omega)}
\label{expconv}
\eeq
with $2\alpha^*=\alpha+\min(\alpha,1-r)$.
Note that the norm on $f$ appearing above is always controlled by the
norm on $f$ needed in \cite{BP16} to obtain an $h^{\epsilon}$ ($0<\epsilon \ll 1$)
convergence bound for $\|u-u_h\|_{\HH^r(\Omega)}$ (see
Subsection~\ref{ss:uuhk}) so the exponential
  convergence of the sinc approximation  is achieved without
 additional assumptions on $f$.

This improves the estimates obtained for the sinc approximation in \cite{BP16}.
For example, if $\Omega$ is convex and the coefficients of $A(\cdot,\cdot)$ are
smooth,  $D(A^s)=\HH^{2 s}(\Omega)$ and the norms $\| A^s\cdot \|$ and
$\|\cdot\|_{\HH^{2s}(\Omega)}$ are equivalent for $s\in [0,1]$. 
As shown in \cite{BP16}, the energy norm error, $\|u-u_h\|_{H^\beta(\Omega)}$ is $O(h^{2-\beta})$
up to a logarithm of $h^{-1}$ 
for $f\in \HH^{2-2\beta}(\Omega)$.   In this case, \eqref{expconv} 
implies  exponential
convergence of the sinc approximation in the energy norm while \eqref{old}
requires $f\in \HH^\beta(\Omega) $ which corresponds to  more regularity
 when $\beta > 2/3$.

We refer to \cite{bonito2017numerical2} for a review of different numerical methods tailored to fractional diffusion.
To the best of our knowledge, besides the above mentioned works, there are no alternative numerical method for the approximation of fractional powers of general regularly accretive operators. 
However, several methods are available when the operator $A$ is real symmetric. 
We refer to \cite{MR2300467,MR2252038,MR2800568} for methods based on expansions using the  eigenpairs of the  discretized operator and to \cite{NOS15} as well as to \cite{meidner2017hp,banjai2017tensor} where approximations of the  ``Neumann to Dirichlet'' map of an extended problem is advocated.
In \cite{AB,d2013fractional}, numerical approximation of the integral definition of the fractional laplacian is considered.
It is worth mentioning that in this context, the recent work \cite{bonito2017numerical} is also based on sinc quadrature approximations of a Dunford-Taylor representation.

The outline of this note is as follows. In Section~\ref{s:reg-acc}, we
review the definition and properties of fractional powers
of the regularly accretive operator $A$.
In Section~\ref{s:SINC}, we prove abstract estimates  for the error in
the sinc quadrature
error $u_h-u_{h,k}$ showing  exponential convergence in the norms of $D(A^s)$, for
$s\ge 0$ under appropriate regularity conditions on $f$. 
We consider the setting described by \eqref{example_space}-\eqref{example} in Section~\ref{app-example} and 
provide error estimates for $u-u_{h,k}$ by combining
the estimates for the error $u-u_h$ given by \cite{BP16} and the
results of Section~\ref{s:SINC}.  We provide numerical illustrations of
the improved theory in Section~\ref{s:numerical}.

We write $a \preceq b$ to mean $a \leq C b$, with a constant $C$ that
does not depend on $a$, $b$, or  the discretization parameters.   
Finally, $a \approx b$ indicates $a\preceq b$ and $b\preceq a$.\\

\paragraph{\textsc{Acknowledgment}} 

The authors would like to thank R.H.~Nochetto for pointing out the possible sub-optimality in \cite{BP16}, thereby prompting the current analysis.

\section{Fractional Powers of Regularly Accretive Operators.}\label{s:reg-acc}

\def\VV{Y}
As in the introduction, we consider complex valued Hilbert spaces with
$Y$ continuously and densely imbedded in $X$  and  a bounded and coercive
sesquilinear form
$A(\cdot,\cdot)$.    This means that there are constants $c_0$ and $C_0$
satisfying
\begin{equation}\label{i:form-strongly-elliptic}
	\Re(A(v,v))\geq c_0 \|v\|_{\VV}^2, \Forall v\in \VV,
\end{equation}
and
\begin{equation}\label{i:form-bounded}
	|A(u,v)|\leq C_0 \|u\|_{\VV} \|v\|_{\VV}, \Forall u,v\in\VV .
\end{equation}
Such a  sesquilinear form $A(\cdot,\cdot)$ is called regular
(cf. \cite{kato1961}).  By possibly rescaling the norm in $X$, we may
assume that 
\beq\|y\|_X\le \|y\|_Y,\Forall y\in Y.
\label{rescaleX}
\eeq
To simplify the notation, we denote $\|v\|:=\|v\|_X$ for $v\in X$ and
$\| \cA\| $ to be the operator norm of $\cA$ when $\cA$ is a bounded
operator from $X$ into $X$.

Following \cite{kato1961},  there is
a uniquely defined unbounded operator on $X$ denoted by $A$ with domain
$D(A)\subset Y$ defined as follows.   Invoking the Lax-Milgram Theorem, we define the one-to-one solution operator $T : X\to \VV$ satisfying 
\beq
	A(Tf , \phi) = (f,\phi),\Forall \phi \in \VV .
        \label{Tdef}
\eeq
The unbounded operator $A$ is defined by  $Aw:=T^{-1}w$ for $w$ in  
$D(A):=\text{Range}(T)$.   The operator $A$ is a
closed densely defined operator on $X$ with domain $D(A)$.  
Such an operator  is said to be regularly accretive (cf. \cite{kato1961}).

For regularly accretive operators, there exists $\omega\in[0,\pi/2)$
such that the spectrum of $A$
 is contained in the sector
$S_\omega:=\{z\in \CC : |\arg z|\leq \omega\}$. 
We also have the following estimate for the resolvent $R_z(A):=(A-zI)^{-1}$
\begin{equation}\label{i:resolvent-L2}
	\|R_z(A)f\|\le (\sin(\pi/2-\omega))^{-1} |z|^{-1}\|f\|,\qquad
        \text{for } \Re(z)<0.
\end{equation}
Also, if $z$ is negative,  $ \|R_z(A)f\|\le
|z|^{-1}\|f\|$, i.e., $A$ is M-accretive. The above bounds are 
a consequence of Theorem  2.2 of \cite{kato1961}.
For later use, we also note that the coercivity assumption
\eqref{i:form-strongly-elliptic} and \eqref{rescaleX} implies that for $\Re(z)\le c_0/2$,
\begin{equation}\label{i:resolvent-H1}
\|R_z(A)f\|\le \|R_z(A)f\|_{Y}\le \frac{2}{c_0} \|f\|.
\end{equation}

\subsection{Fractional Powers} \label{ss:frac pow}
For $\beta\in(0,1)$, the negative fractional powers $A^{-\beta}$ of a regularly accretive operator $A$ are defined by  \eqref{e:negative-power}. It follows from
\eqref{i:resolvent-L2} and \eqref{i:resolvent-H1} that the integral \eqref{e:negative-power} 
is Bochner integrable in $X$ and so $A^{-r}$ is a bounded operator on $X$.
When $r$ is non-negative and not an integer with $n-1<r<n$, then 
$$D(A^r):=\{x\in X\ : \ A^{r-n}x \in D(A^n)\}, \qquad A^rx = A^n
A^{r-n}x.$$

We shall make use of the following commutivity proproperties involving fractional powers and the
resolvent, see e.g. \cite{lunardi}.  
\begin{enumerate} [(a)]
\item For $r\ge 0 $ and $\beta\in (0,1)$,
\beq
A^{-\beta} A^r x= A^{r-\beta}x = A^r A^{-\beta}x,\Forall x\in D(A^r).
\label{e:commutative1}
\eeq
\item For $z\in \rho(A)$ and $r\ge0$,  $R_z(A):D(A^r)\rightarrow
  D(A^{r+1})$ and 
\begin{equation}\label{e:commutative}
A^r R_z(A) x = R_z(A) A^rx, \Forall x\in D(A^r).
\end{equation} 
\end{enumerate}

\subsection{Interpolation Scales}

Since a regularly accretive operator is M-accretive, Corollary 4.3.6 of
\cite{lunardi} shows that  for $s\in (0,1)$,
$$[X,D(A)]_s = D(A^s)$$
with $[\cdot,\cdot]_s$ denoting the interpolation scale using the
complex method.  Corollary 2.1.8 of \cite{lunardi} then implies that
\beq
\|A^s v\| \preceq \|v\|_{[L^2(\Omega),D(A)]_s} \preceq \|Av\|^s
\|v\|^{1-s},\Forall v\in D(A).
\label{scaleineq}
\eeq

\section{Sinc Approximations to  $A^{-\beta}$.}\label{s:SINC}

In this section, we revisit the sinc approximation technique developed
in \cite{BP16} (see, \eqref{sinc-abs}) and provide an abstract theorem which
weakens  
 the regularity required on $f$ to achieve an exponential rate of convergence. 

As anticipated in the introduction, we use the change of variable $\mu:=e^y$ in
\eqref{e:negative-power}, i.e., 
\beq\label{e:sinc-int}
A^{-\beta}= \frac{\sin(\pi \beta)}{\pi} \int_{-\infty}^\infty e^{(1-\beta)y} (e^yI+A)^{-1}\, dy.
\eeq
For any positive integers $N$ and $M$ and a positive quadrature step $k$, we define 
the sinc approximation of $A^{-\beta}$ by \eqref{sinc-abs} which
corresponds to a truncated equally spaced quadrature approximation to \eqref{e:sinc-int}.
Notice that to simplify the notation, we do not specify the dependency on $M$ and $N$. In any event, in practice both $M$ and $N$ are functions of $k$ (see Remark~\ref{rem:MN}).

\subsection{Error Analysis}

The analysis for the sinc approximation error involves the analyticity
and decay properties of the function
$$
	F(z;\theta,\eta):=  e^{(1-\beta)z} (A^s(e^z I+A)^{-1}\theta,\eta).
$$
on the band
$$
z\in \cS_{\pi/2}:= \{ z \in \CC \ : \ |\Im(z)| \leq \pi/2 \}.
$$
Here
$(\cdot,\cdot)$ denotes the inner product on $X$ and $\theta,\eta$ are
fixed in $X$.
The decay properties are addressed in the following lemma.

\begin{lemma}[Integrand Estimate]\label{l:decay}
Let $s$ be in $[0,\beta)$ and $\delta$ be non-negative.
For $z \in \cS_{\pi/2}$,
\begin{equation}\label{i:decay}
	\| e^{(1-\beta)z} A^{s}(e^z I+A)^{-1} \| \preceq  \left\lbrace
	\begin{array}{ll}
	e^{(s-\beta)\Re(z)}&:\quad  \hbox{when } \Re(z)>0,\\
	e^{(1-\beta)\Re(z)} &:\quad  \hbox{when } \Re(z)\le 0. 
	\end{array}\right. 
\end{equation}
The hidden constant depends only on $\beta$, $s$, $c_0$ and $C_0$.
\end{lemma}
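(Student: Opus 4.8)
The plan is to split the band $\cS_{\pi/2}$ into the two regimes $\Re(z)>0$ and $\Re(z)\le0$ and, in each, to decompose the operator $A^{s}(e^{z}I+A)^{-1}$ so that the resolvent estimates \eqref{i:resolvent-L2} and \eqref{i:resolvent-H1} can be applied directly. Since the exponential prefactor $e^{(1-\beta)z}$ only contributes the modulus $e^{(1-\beta)\Re(z)}$, everything reduces to bounding $\|A^{s}(e^{z}I+A)^{-1}\|$ and, in the region $\Re(z)>0$, extracting an extra decay factor $e^{-s\Re(z)}$. Throughout I write $\mu:=e^{z}$, so $|\mu|=e^{\Re(z)}$ and $-\mu\in S_{\pi-\omega}$; the key point is that $-\mu$ lies to the left of the spectral sector, so $(\mu I+A)^{-1}=-R_{-\mu}(A)$ is well defined and the resolvent bounds apply with $z\leftarrow-\mu$.

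First I would treat $\Re(z)\le0$, i.e. $|\mu|\le1$. Here I want to avoid any power of $|\mu|$ in the denominator, so I write $A^{s}(\mu I+A)^{-1}=A^{s-1}\cdot A(\mu I+A)^{-1}=A^{s-1}\bigl(I-\mu(\mu I+A)^{-1}\bigr)$. Since $s<\beta<1$, the operator $A^{s-1}=A^{-(1-s)}$ is bounded on $X$ (it is a negative power, bounded by the remarks following \eqref{e:negative-power}), and $\|I-\mu(\mu I+A)^{-1}\|\le1+|\mu|\,\|(\mu I+A)^{-1}\|$; using \eqref{i:resolvent-L2} with $-\mu$ in the left half-plane gives $|\mu|\,\|(\mu I+A)^{-1}\|\le(\sin(\pi/2-\omega))^{-1}$, a constant. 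This yields $\|A^{s}(\mu I+A)^{-1}\|\preceq1$, hence the claimed bound $e^{(1-\beta)\Re(z)}$ after reinserting the prefactor. (One can alternatively use \eqref{i:resolvent-H1}, since $\Re(-\mu)\le0\le c_0/2$, but the M-accretivity bound suffices.)

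Next, for $\Re(z)>0$, i.e. $|\mu|\ge1$, I would use interpolation between the two extreme values $s=0$ and "$s=1$". For $s=0$: $\|(\mu I+A)^{-1}\|\le|\mu|^{-1}$ by M-accretivity, giving $|\mu|^{-1}=e^{-\Re(z)}$. For the other endpoint I use $A(\mu I+A)^{-1}=I-\mu(\mu I+A)^{-1}$, which is uniformly bounded exactly as above, giving $|\mu|^{0}=1$. Writing $A^{s}(\mu I+A)^{-1}=\bigl(A(\mu I+A)^{-1}\bigr)^{s}\bigl((\mu I+A)^{-1}\bigr)^{1-s}$ is tempting but not literally valid for operators that do not commute in the naive sense; instead I would invoke the moment/interpolation inequality \eqref{scaleineq} in the form $\|A^{s}v\|\preceq\|Av\|^{s}\|v\|^{1-s}$ applied to $v=(\mu I+A)^{-1}\theta$, which gives
\[
\|A^{s}(\mu I+A)^{-1}\theta\|\preceq\|A(\mu I+A)^{-1}\theta\|^{s}\,\|(\mu I+A)^{-1}\theta\|^{1-s}\preceq\|\theta\|^{s}\bigl(|\mu|^{-1}\|\theta\|\bigr)^{1-s}=|\mu|^{-(1-s)}\|\theta\|.
\]
Thus $\|A^{s}(\mu I+A)^{-1}\|\preceq|\mu|^{s-1}=e^{(s-1)\Re(z)}$, and multiplying by $e^{(1-\beta)\Re(z)}$ gives $e^{(s-\beta)\Re(z)}$, as desired. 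Since $s<\beta$, this exponent is negative, so the integrand decays as $\Re(z)\to+\infty$; for $\Re(z)\to-\infty$ the exponent $1-\beta>0$ but the overall construction in the truncated quadrature controls that tail through the choice of $M$.

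The main obstacle is the middle step: one must be careful that the factorization $A^{s}=A^{s-1}\cdot A$ and the interpolation inequality are applied to elements genuinely in the relevant domains. The clean way around this is precisely \eqref{scaleineq}, which is stated as an inequality on all of $D(A)$ and requires no commutation; combined with the two uniform resolvent bounds, it delivers the estimate with a constant depending only on $\beta,s,c_0,C_0$ (through $\omega$, which is controlled by $c_0,C_0$). The remaining bookkeeping — verifying $-\mu$ stays outside $S_\omega$ for all $z\in\cS_{\pi/2}$, and that the constants are uniform in $\Im(z)$ since only $|\mu|$ enters — is routine.
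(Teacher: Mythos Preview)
Your argument is correct and, once you land on using \eqref{scaleineq} with $v=(e^{z}I+A)^{-1}\theta$, it is essentially the paper's proof: interpolate via $\|A^{s}v\|\preceq\|Av\|^{s}\|v\|^{1-s}$, bound $\|A(e^{z}I+A)^{-1}\|=\|I-e^{z}(e^{z}I+A)^{-1}\|$ uniformly, and bound $\|(e^{z}I+A)^{-1}\|$ by $e^{-\Re(z)}$ (up to a constant) for $\Re(z)>0$ and by a constant for $\Re(z)\le0$. The only cosmetic difference is that the paper applies the interpolation inequality in both regimes rather than switching to the $A^{s-1}\cdot A$ factorization for $\Re(z)\le0$; your factorization also works, but as you yourself note, \eqref{scaleineq} is cleaner and avoids the domain bookkeeping. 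One small correction: the bound $\|(\mu I+A)^{-1}\|\le|\mu|^{-1}$ you attribute to ``M-accretivity'' only holds verbatim when $-\mu$ is real and negative; for general $z\in\cS_{\pi/2}$ you need the sectorial estimate \eqref{i:resolvent-L2}, which gives $(\sin(\pi/2-\omega))^{-1}|\mu|^{-1}$ instead---harmless for the result, but worth stating correctly.
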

\begin{proof}
We fix $z \in \cS_{\pi/2}$ and $\theta\in X$.  As $-e^z$ is in $\rho(A)$ (in fact, $\Re(-e^z)<0$),
$(e^zI+A)^{-1} \theta $ is in $D(A)$.
Applying \eqref{scaleineq} 
gives
\begin{equation*}
	\| A^{s}(e^z I+A)^{-1}\theta \| \preceq \|(e^z I+A)^{-1}\theta\|^{1-s} \|A(e^z I+A)^{-1}\theta\|^s.
\end{equation*}
We apply \eqref{i:resolvent-L2} and \eqref{i:resolvent-H1} to obtain 
\begin{equation}\label{i:decay2}
 \|(e^zI+A)^{-1}\theta \|  \le \|\theta \|\left\lbrace
 \begin{array}{ll}
(\sin(\pi/2-\omega))^{-1}  e^{-\Re(z)},&\quad  \textrm{when } \Re(z)>0,\\
c_0/2,  & \quad \textrm{when } \Re(z) \leq 0.
\end{array}\right.
\end{equation}
Also \eqref{i:resolvent-L2} implies that
$$\|e^z (e^zI+A)^{-1}\| \le  (\sin(\pi/2-\omega))^{-1}, \Forall z\in \cS_{\pi/2}.$$
and hence 
$$
\|A(e^zI+A)^{-1}\| = \|I-e^z (e^zI+A)^{-1}\|\le  1+(\sin(\pi/2-\omega))^{-1}.
$$
Combining the above estimates gives
\begin{equation}\label{i:decay1}
	\| A^{s}(e^z I+A)^{-1} \| \preceq  \left\lbrace
	\begin{array}{ll}
	e^{(s-1)\Re(z)}&:\quad  \hbox{when } \Re(z)>0,\\
	1 &:\quad  \hbox{when } \Re(z)\le 0. 
	\end{array}\right. 
\end{equation}
The lemma follows from the above estimates and the trivial estimate,
$$|e^{(1-\beta)z}| = e^{(1-\beta) \Re(z)}.
$$
\end{proof}

Now by \eqref{i:decay2} and \eqref{i:decay1}, for $z\in \cS_{\pi/2}$,
$$ 
\begin{aligned}
|\frac{d}{dz} F(z;\theta,\eta)| \leq |e^{(2-\beta)z} (A^s(e^z
I+A)^{-2}\theta,\eta)| + (1-\beta)|F(z;\theta,\eta)|
	\\\preceq \|\theta\|\|\eta\|\left \{
	\begin{array}{ll}
	e^{(1+s-\beta)\Re(z)}&:\quad  \hbox{when } \Re(z)>0,\\
	e^{(1-\beta)\Re(z)} &:\quad  \hbox{when } \Re(z)\le 0, 
	\end{array}\right .
\end{aligned}
$$
i.e., $F(.;\theta,\eta)$ is analytic on $\cS_{\pi/2}$ for each $\theta$
and $\eta$ in $X$.        

We are now in position to prove our result which bounds the 
quadrature error.
  
\begin{theorem}[Sinc Quadrature Error]   \label{l:exp}
For $M, N>0$  integers, 
and $k >0$, let $\cQ_k^{-\beta}(A)$ be defined by \eqref{sinc-abs}.
Given $f\in D(A^t) $ with $t\ge 0$ and $-t\le s<\beta$,
then 
\begin{equation}
\begin{split}
&\|(A^{-\beta} - \cQ^{-\beta}_k(A)) f\|_{D(A^{s+t})} \preceq 
\bigg[
 (\sinh(\blue{\pi^2/(2k)}))^{-1}e^{-\blue{\pi^2/(2k)}} \\
& \qquad\qquad +  e^{-(\beta-s^+)N k} +  e^{-(1-\beta) M k}\bigg]\|f\|_{D(A^t)} ,
\end{split}
\label{aquad}
\end{equation}
where $s^+=\max(0,s)$ and the hidden constant depends only on $\beta$, $s$, $c_0$ and $C_0$.
\end{theorem}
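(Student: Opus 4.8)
The plan is to move the fractional power $A^{s+t}$ inside the Balakrishnan integral \eqref{e:sinc-int} and the quadrature sum \eqref{sinc-abs}, reducing the estimate to a classical sinc quadrature error bound on the band $\cS_{\pi/2}$ for the scalar integrand of Lemma~\ref{l:decay}, together with two elementary geometric‑tail estimates for the truncation of the infinite quadrature sum. First I would set $\sigma:=s^{+}=\max(0,s)$ (so $0\le\sigma<\beta$, since $s<\beta$) and $\theta:=A^{s+t-\sigma}f$ — thus $\theta=A^{t}f$ when $s\ge0$ and $\theta=A^{s+t}f$ when $s<0$ — and note that $\theta=A^{s-\sigma}A^{t}f$ with $A^{s-\sigma}$ a non‑positive, hence bounded, power of $A$, so $\|\theta\|\preceq\|f\|_{D(A^{t})}$. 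Since $s<\beta<1$, each value $(e^{y}I+A)^{-1}f$ lies in $D(A^{t+1})$ by \eqref{e:commutative}, hence in $D(A^{s+t})$; combining this with \eqref{e:commutative1}--\eqref{e:commutative} and the closedness of $A^{s+t}$ (to move it inside the Bochner integral, whose new integrand is dominated by an integrable function via Lemma~\ref{l:decay}) gives
\[
A^{s+t}\bigl(A^{-\beta}-\cQ^{-\beta}_k(A)\bigr)f
=\frac{\sin(\pi\beta)}{\pi}\Bigl(\int_{-\infty}^{\infty} g(y)\,dy-k\sum_{\ell=-M}^{N} g(\ell k)\Bigr),\qquad
g(y):=e^{(1-\beta)y}A^{\sigma}(e^{y}I+A)^{-1}\theta .
\]
It then suffices to bound the $X$‑norm of the parenthesized quantity by the bracket of \eqref{aquad} times $\|\theta\|$.

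Next I would split this quantity into the full‑line sinc error $\int_{\mathbb R}g-k\sum_{\ell\in\mathbb Z}g(\ell k)$ plus the truncation tails $k\sum_{\ell\le-M-1}g(\ell k)$ and $k\sum_{\ell\ge N+1}g(\ell k)$. Lemma~\ref{l:decay}, applied with its parameter equal to $\sigma\in[0,\beta)$, gives $\|g(\ell k)\|\preceq\|\theta\|\,e^{(\sigma-\beta)\ell k}$ for $\ell\ge1$ and $\|g(\ell k)\|\preceq\|\theta\|\,e^{(1-\beta)\ell k}$ for $\ell\le-1$; summing the resulting geometric series and using $e^{x}-1\ge x$ bounds the two tails by $(\beta-\sigma)^{-1}e^{-(\beta-\sigma)Nk}\|\theta\|$ and $(1-\beta)^{-1}e^{-(1-\beta)Mk}\|\theta\|$, with constants independent of $k$. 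Since $\sigma=s^{+}$, these are precisely the last two terms of \eqref{aquad}.

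For the leading term I would test against $\eta\in X$ with $\|\eta\|\le1$, so that $(g(z),\eta)=F(z;\theta,\eta)$ with the parameter $s$ there replaced by $\sigma$. This scalar function is analytic on an open strip strictly containing $\cS_{\pi/2}$ (because $\omega<\pi/2$ forces $-e^{z}\in\rho(A)$ there), decays exponentially as $\Re z\to\pm\infty$ uniformly for $|\Im z|\le\pi/2$ by Lemma~\ref{l:decay} and \eqref{i:decay1}, and has integrable traces on $\Im z=\pm\pi/2$ satisfying $\int_{-\infty}^{\infty}\bigl(|F(x+i\pi/2;\theta,\eta)|+|F(x-i\pi/2;\theta,\eta)|\bigr)\,dx\preceq\|\theta\|$. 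Hence $F(\cdot;\theta,\eta)$ lies in the Hardy‑type class on $\cS_{\pi/2}$ to which the classical sinc quadrature error estimate applies (see \cite{lundbowers}), and that estimate yields $\bigl|\int_{\mathbb R}F(y;\theta,\eta)\,dy-k\sum_{\ell\in\mathbb Z}F(\ell k;\theta,\eta)\bigr|\preceq\bigl(\sinh(\pi^{2}/(2k))\bigr)^{-1}e^{-\pi^{2}/(2k)}\|\theta\|$. Taking the supremum over $\eta$ bounds the $X$‑norm of the full‑line sinc error by the first term of \eqref{aquad} times $\|\theta\|$; combining this with the truncation bounds and $\|\theta\|\preceq\|f\|_{D(A^{t})}$ proves \eqref{aquad}.

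The main obstacle is the third step: verifying that $F(\cdot;\theta,\eta)$ genuinely belongs to the class for which the sinc estimate applies — analyticity up to and including the lines $|\Im z|=\pi/2$ together with a bound on the boundary integral that is uniform in the data — which is exactly what Lemma~\ref{l:decay} (valid on the \emph{closed} band) and the strict inequality $\omega<\pi/2$ buy us. The remaining bookkeeping — the fractional‑power identities used in the reduction for negative $s$, and checking that the geometric‑tail constants do not depend on $k$ — is routine.
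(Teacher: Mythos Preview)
Your proposal is correct and follows essentially the same route as the paper: the same reduction via $A^{s^+}$ acting on an element of $X$ (your $\theta$), the same three-way split into the full-line sinc error plus two truncation tails, the same appeal to Lemma~\ref{l:decay} for the tails, and the same invocation of the Lund--Bowers sinc estimate on $\cS_{\pi/2}$ for the leading term. The only cosmetic differences are that the paper bounds each tail sum by the corresponding integral rather than summing the geometric series, and that the paper carries out the reduction by duality from the outset rather than first writing the operator-valued integrand $g$ and then testing against $\eta$.
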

\begin{proof}
Using \eqref{e:commutative}, \eqref{e:commutative1} 
 and the definition of the norm in \eqref{aquad}, \eqref{aquad} can be rewritten
$$
\|(A^{-\beta} - \cQ^{-\beta}_k(A))A^{s+t} f\| \preceq C(k) \|A^t
f\|,\Forall f\in D(A^t)$$
with $C(k)$ denoting the expression in brackets on the right hand side of \eqref{aquad}.
Since $A^t$ is a one to one map of $D(A^t)$ onto $X$ and $A^s$ is a bounded operator when $s<0$,
it is suffices to show that
\beq  |((A^{-\beta} - \cQ^{-\beta}_k(A))A^{s^+}\theta,\eta)| \preceq
C(k),\Forall \theta,\eta\in X
\label{to-show}
\eeq
with  $\|\theta\|=\|\eta\|=1$.

Extending  the sum in \eqref{to-show} to a sum over all integers and applying the triangle
inequality gives
\begin{equation}\bal
  |((A^{-\beta} - \cQ^{-\beta}_k(A))A^{s^+}\theta,\eta)|&\le
  \bigg| \int_{-\infty}^\infty F(y;\theta,\eta)dy - k \sum_{\ell =
    -M}^N F(\ell k;\theta,\eta)\bigg|\\
  &\qquad +
\bigg|k \sum_{\ell > N} F(\ell k;\theta,\eta)\bigg|
+\bigg|k \sum_{\ell <-M } F(\ell k;\theta,\eta)\bigg|.
\eal
\label{triple}
\end{equation}

\boxed{1}
For the second sum  on the right hand side of \eqref{triple},
applying Lemma~\ref{l:decay} gives
$$
k\sum_{\ell>N} | F(\ell k;\theta,\eta) | \preceq \int_{ N k}^\infty
e^{(s^+-\beta)y} \,  dy = e^{(s^+-\beta)N k}/(\beta-s^+).
$$
Similarly,
$$
k\sum_{\ell<-M} | F(\ell k;\theta,\eta) | \preceq \int_{-\infty}^{-Mk}
e^{(1-\beta)y} \, dy
= e^{(\beta-1)M k}/(1-\beta).
$$

\boxed{2} For first sum on the right hand side of \eqref{triple},
we invoke Theorem 2.20 in \cite{lundbowers}, which states that
\begin{equation}\label{e:result_sinc_thm}
\left| \int_{-\infty}^\infty F(y;\theta,\eta) - k \sum_{\ell = -\infty}^\infty F(\ell k;\theta,\eta)\right| \leq \frac{R}{2\sinh(\blue{\pi^2/(2k)})} e^{-\blue{\pi^2/(2k)}}
\end{equation}
provided that 
\begin{equation}\label{e:cond1}
\int_{-\pi/2}^{\pi/2} |F(t+iy;\theta,\eta)| dy \preceq 1, \qquad \Forall t \in \mathbb R
\end{equation}
and $R$ is a constant such that
\begin{equation}\label{e:cond2}
\int_{-\infty}^\infty \left( |F(y-i\pi/2;\theta,\eta)| + |F(y+i \pi/2; \theta,\eta) |\right) dy \leq R.
\end{equation}

Both conditions follows from Lemma~\ref{l:decay}.
In particular,
$$R=2C\left((\beta-s^+)^{-1} + (1-\beta)^{-1}\right),$$
where $C$ is the constant hidden in estimate \eqref{i:decay}.

\boxed{3} The desired estimate \eqref{aquad} for $A$ is obtained upon gathering the estimates derived in steps 1 and 2.
\end{proof}

\begin{remark}[Exponential Decay] \label{rem:MN} In practice, we advocate to balance the three exponentials
on the right hand side of \eqref{aquad}, thereby 
imposing
$$\blue{\pi^2/(2k)}\approx (\beta-s^+) k N\approx (1-\beta) kM.$$
Thus, given $k>0$, we set 
$$ \blue{N=\bigg\lceil \frac {\pi^2}{2(\beta-s^+) k^2}\bigg\rceil \quad \hbox{ and }\quad
M=\bigg \lceil \frac {\pi^2}{2(1-\beta) k^2}\bigg \rceil,}
$$
which leads to
$$
\begin{aligned}
 \|(A^{-\beta} - & \cQ^{-\beta}_k(A))\pi f\|_{D(A^{s+t})}\\
 &\preceq 
 \bigg[ \frac 1 {\beta-s^+} + \frac 1 {1-\beta} \bigg] \bigg[  \frac {e^{-\blue{\pi^2/(2k)}}}{\sinh(\blue{\pi^2/(2k)})}+
e^{-\blue{\pi^2/(2k)}} \bigg] \|f\|_{D(A^t)} .
\end{aligned}
$$
Note that the coefficient on the right hand side above asymptotically behaves like 
$$ \bigg[ \frac 1 {\beta-s^+} + \frac 1 {1-\beta} \bigg]  e^{-\blue{\pi^2/(2k)}} \quad \textrm{as} \quad  k\rightarrow 0.$$ 
This choice will be refer to as the ``balanced'' scheme.
Another possibility is to simply take $k^{-2} \sim N = M$, which also lead to exponential decay but, as we shall see in Section~\ref{s:numerical}, is less efficient.
\end{remark}

\def\VV{\mathbb V}
\section{The Numerical Approximation of \eqref{example_space}-\eqref{example}.}\label{app-example}
For the remainder of this paper, we focus on the example described by
\eqref{example_space}-\eqref{example}.   The coercivity and boundedness assumptions made in
the introduction guarantee that \eqref{i:form-strongly-elliptic} and
\eqref{i:form-bounded} hold with $Y=\VV$ so that  the operator
$A$ associated with \eqref{example} and its fractional powers  are defined in  
Section \ref{ss:frac pow}.

\subsection{The Finite Element Approximation}\label{ss:fem}
We now assume that $\Omega$ is a polyhedon. 
Let $\{\cT_h\}_{h>0}$ be a sequence of conforming subdivisions
made of simplices with maximal mesh size $h<1$. We further assume that $\{\cT_h\}$ are shape-regular and quasi-uniform (cf. \cite{EG})
with constants independent of $h$
and the triangulation matches the partitioning $\Gamma=\overline{\Gamma_D\cup\Gamma_N}$. 
Let $\vh$ be the space of continuous
piecewise linear finite element functions subordinate to  $\cT_h$
and let $\pi_h$ be the $L^2$-orthogonal projector onto $\vh$.
The semi-discrete approximation $u_h$ and the fully discrete approximation $u_{h,k}$
are defined as in the introduction.

  Before going further, we note that the quasi-uniformity assumption is required to guarantee the  $H^1(\Omega)$ stability of $\pi_h$, namely
\begin{equation}\label{i:l2-stable}
	\|\pi_h v\|_{H^1(\Omega)}\preceq \|v\|_{H^1(\Omega)}.
\end{equation}
The quasi-uniformity assumption can be relaxed, for instance to allow  certain grading condition on $\cT_h$; see \cite{thomee,BPS,Bank}.
The $H^1$ stability estimate \eqref{i:l2-stable} implies, by interpolation, that for $r \in [0,1]$
\begin{equation}\label{i:l2-stable_inter}
	\|\pi_h v\|_{\HH^r(\Omega)}\preceq \|v\|_{\HH^r(\Omega)}.
\end{equation}

\subsection{Elliptic regularity and finite element error
  estimates.} \label{ss:ell-reg}

We define $\HH_a^{-1}$ to be the set of bounded anti-linear functionals on
$\VV$.  As usual, $L^2(\Omega)$ can be imbedded in $\HH_a^{-1}$ by
identifying $f\in L^2(\Omega)$ with the functional
$$\langle F,\phi \rangle = (f,\phi), \Forall \phi\in \VV$$
with $\langle \cdot,\cdot \rangle$ denoting the anti-linear
functional/function pairing.
We define the spaces $\HH^r_a$ to $r\in [-1,0)$ by setting
  $$\HH^r_a=(\HH^{-1}_a,L^2(\Omega))_{r+1,2}.$$
It follows that $T:L^2(\Omega)\rightarrow D(A)$ extends to a bounded
antilinear map of $\HH^{-1}_a\rightarrow \VV$ defined by replacing
$(f,\phi)$ with $\langle F,\phi \rangle$.

The adjoint operator $T^*:L^2(\Omega)\rightarrow \VV$ is defined
analogously to $T$, i.e.,
$$A(\phi,T^*g)=(\phi,g),\Forall \phi\in \VV.$$
In this case, $T^*g$ is a linear functional and extends to  $\HH^{-1}_l$,
the set of bounded linear functionals on $\VV$.  As above, we define
  $$\HH^r_l=(\HH^{-1}_l,L^2(\Omega))_{r+1,2}.$$

The analysis of the error between $u$ and $u_h$ relies on the regularity
of $T$ and $T^*$ described in the following assumption.
\begin{assumption}\label{a:elliptic-regularity} There exists $\alpha\in
  (0,1]$ such that:
\begin{enumerate}[(a)]
 \item $T$ is an isomorphism from $\HH_a^{-1+r}(\Omega)$ to $\HH^{1+r}(\Omega)$
   for any $r\in(0,\alpha]$.
 \item $T^*$ is an isomorphism from $\HH_l^{-1+r}(\Omega)$ to $\HH^{1+r}(\Omega)$
for any $r\in(0,\alpha]$.
\end{enumerate}
\end{assumption}

Let $P_h:\VV\rightarrow \vh$ denote the elliptic projection defined by
$$A(P_hu,v)=A(u,v),\Forall v\in \vh.$$
We note that it is a consequence of
Assumption~\ref{a:elliptic-regularity} and standard finite element error
analysis arguments that for $r\in (0,\alpha)$
\beq
\|(I-P_h)u\|_{H^1(\Omega)} \preceq  h^{r_1}\|u\|_{H^{1+r_1}(\Omega)},\Forall u\in
H^{1+r_1}(\Omega)\cap \VV
\label{h1-fe}
\eeq
and 
\beq
\|(I-P_h)u\|_{\HH^{1-r_2}(\Omega)} \preceq h^{r_2}\|u\|_{H^{1}(\Omega)},\Forall u\in \VV
\label{dual-fe}
\eeq
 
From the equivalence result in \cite{kato1961} and \eqref{i:l2-stable_inter}, we deduce that the discrete norms 
\beq
\|v_h\|_{\HH^r(\Omega)}\approx \|A_h^{r/2} v_h\|, \Forall v_h\in \vh,
\label{i:discrete-equiv} 
\eeq
are equivalent for $r\in [0,1)$. This equivalence holds also for the adjoint operator $A_h^*$ and the case
$r=1$ follows from Assumption~\ref{a:elliptic-regularity} (see, \cite[Theorem 6.5]{BP16}). 

When $r>1$, we have the following lemma.
\begin{lemma}\label{l:pi-bound}
Let $r\in (1,1+\alpha]$ where is $\alpha$ the regularity index in Assumption~\ref{a:elliptic-regularity}. 
For $v\in \HH^{r}(\Omega)$, there holds
$$
	\|A_h^{r/2}\pi_h v\|\preceq \|v\|_{\HH^{r}(\Omega)} .
$$
\end{lemma}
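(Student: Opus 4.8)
The plan is to interpolate between the case $r=1$ (already known from Assumption~\ref{a:elliptic-regularity} and the discussion after \eqref{i:discrete-equiv}) and the endpoint case $r=1+\alpha$, and then to establish the endpoint estimate by a duality argument. For the interpolation step I would note that the family of operators $v\mapsto A_h^{r/2}\pi_h v$ is, for fixed $h$, bounded from $\HH^1(\Omega)$ to $L^2(\Omega)$ with constant independent of $h$ by \eqref{i:discrete-equiv} and \eqref{i:l2-stable_inter}, and — once the endpoint is in hand — bounded from $\HH^{1+\alpha}(\Omega)$ to $L^2(\Omega)$ with $h$-independent constant; since $\HH^r(\Omega)$ for $r\in(1,1+\alpha)$ is, up to equivalent norms, the real interpolation space $(\HH^1(\Omega),\HH^{1+\alpha}(\Omega))_{\theta,2}$ with $\theta=(r-1)/\alpha$, the bound for intermediate $r$ follows by the interpolation theorem. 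So the crux is the endpoint $r=1+\alpha$.

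For the endpoint, I would use that $\|A_h^{(1+\alpha)/2}\pi_h v\| = \|A_h^{\alpha/2}A_h^{1/2}\pi_h v\|$ and rewrite this via \eqref{i:discrete-equiv} (applied at exponent $\alpha\in[0,1)$, valid for both $A_h$ and $A_h^*$) as $\|A_h^{1/2}\pi_h v\|_{\HH^\alpha(\Omega)}$, or more conveniently estimate it by duality: $\|A_h^{(1+\alpha)/2}\pi_h v\| = \sup_{\|w_h\|=1}(A_h^{(1+\alpha)/2}\pi_h v, w_h) = \sup (A_h^{1/2}\pi_h v, (A_h^*)^{\alpha/2} w_h)$. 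Writing $A_h^{1/2}\pi_h v$ in terms of the elliptic projection $P_h$ — recalling that $P_h = A_h^{-1}\pi_h A$ on $\VV$ in the appropriate sense, so that $A_h^{1/2}\pi_h v$ relates to $A_h^{-1/2}\pi_h (A v)$ when $v\in D(A)$ and more generally to a negative-order norm of $v$ — one is led to bound $\|A_h^{1/2}\pi_h v\|$ by $\|v\|_{\HH^1(\Omega)}$ (this is essentially the $r=1$ case) and then to use the elliptic regularity Assumption~\ref{a:elliptic-regularity}(b) together with \eqref{i:discrete-equiv} for the adjoint to control the extra $\alpha/2$ derivatives. More precisely, I expect the clean route is: reduce to showing $\|(A_h^*)^{\alpha/2} w_h\|$ is controlled, pair against $A_h^{1/2}\pi_h v$, insert $\pm P_h v$ or use the identity $A(\pi_h\text{-related quantity})$, and invoke \eqref{h1-fe}–\eqref{dual-fe} to absorb the mismatch between $\pi_h v$ and $P_h v$ at the cost of $\|v\|_{\HH^{1+\alpha}(\Omega)}$. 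The regularity index $\alpha$ is exactly what \eqref{e:hh-Ar} permits, so that $\HH^{1+\alpha}(\Omega)=D(A^{(1+\alpha)/2})$ with equivalent norms and the right-hand side makes sense.

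The main obstacle, I anticipate, is handling the discrepancy between $\pi_h v$ (the $L^2$ projection) and $P_h v$ (the elliptic projection) at the fractional order $1+\alpha$, since $A_h$ interacts naturally with $P_h$ rather than with $\pi_h$. The identity $A_h P_h v = \pi_h A v$ for $v\in D(A)$ is the bridge, but for general $v\in\HH^{1+\alpha}(\Omega)$ one only has $v\in D(A^{(1+\alpha)/2})$, not $v\in D(A)$, so one must work with the square-root $A_h^{1/2}$ and a commutation/duality argument rather than with $A_h$ directly; the estimates \eqref{h1-fe} and \eqref{dual-fe} for $(I-P_h)$ in $H^1$ and in $\HH^{1-r_2}$, combined with the $h$-independent norm equivalence \eqref{i:discrete-equiv} for exponents in $[0,1)$, should be exactly enough to close the gap, but assembling them in the correct order — first peel off $A_h^{\alpha/2}$ by the discrete equivalence, then handle $A_h^{1/2}\pi_h v$ by duality against $\VV$-functions using elliptic regularity — is the delicate bookkeeping. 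I would present the argument for $v\in D(A)$ first and then pass to the general case by density of $D(A)$ in $\HH^{1+\alpha}(\Omega)$.
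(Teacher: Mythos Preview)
Your overall plan---split $\pi_h v$ into $P_h v$ and the remainder $(\pi_h-P_h)v$, handle the $P_h$ part by duality and Galerkin orthogonality, and the remainder by approximation---is the same skeleton as the paper's proof. Two points, however, need correction.

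First, the interpolation framing is not set up correctly: the map $v\mapsto A_h^{r/2}\pi_h v$ \emph{depends on $r$}, so you are not interpolating a single operator between $\HH^1$ and $\HH^{1+\alpha}$. One can repair this by viewing $\pi_h$ itself as the fixed operator and interpolating the target scale $D(A_h^{r/2})$, but this is an unnecessary detour. The paper simply proves the bound for every $r\in(1,1+\alpha]$ directly: write $\|A_h^{r/2}P_h v\|=\sup_{\theta_h}(A_hP_hv,(A_h^*)^{r/2-1}\theta_h)/\|\theta_h\|$, substitute $\phi_h=(A_h^*)^{r/2-1}\theta_h$, use Galerkin orthogonality $A(P_hv,\phi_h)=A(v,\phi_h)$, and then the discrete equivalence \eqref{i:discrete-equiv} for $A_h^*$ at exponent $2-r\in[1-\alpha,1)$ together with the continuous identification $A(v,\phi_h)=(A^{r/2}v,(A^*)^{1-r/2}\phi_h)$ finishes the $P_h$ part. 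No endpoint-plus-interpolation is needed.

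Second, and this is the genuine gap, you assert that \eqref{h1-fe}, \eqref{dual-fe}, and \eqref{i:discrete-equiv} ``should be exactly enough'' for the remainder term $\|A_h^{r/2}(\pi_h-P_h)v\|$. They are not: since $r>1$, the exponent $r/2$ lies outside the range where \eqref{i:discrete-equiv} gives an $h$-independent equivalence, and nothing in your list converts $A_h^{r/2}$ back to something controllable. The paper inserts the missing ingredient, the \emph{inverse estimate} $\|A_h^{s/2}v_h\|\preceq h^{-s}\|v_h\|$ for $s\in[0,1]$, to get
\[
\|A_h^{r/2}(\pi_h-P_h)v\|\preceq h^{1-r}\|(\pi_h-P_h)v\|_{H^1(\Omega)}\preceq h^{1-r}\|(I-P_h)v\|_{H^1(\Omega)}\preceq h^{1-r}\cdot h^{r-1}\|v\|_{\HH^r(\Omega)},
\]
the last step being \eqref{h1-fe} with $r_1=r-1\in(0,\alpha]$. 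Without the inverse inequality the $h^{1-r}$ loss cannot be generated and the approximation gain $h^{r-1}$ has nothing to cancel against. Add this ingredient and drop the interpolation layer, and your argument becomes the paper's.
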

\begin{proof}
We first show that 
\begin{equation}\label{i:riesz-stable}
	\|A_h^{r/2}P_h v\|\preceq \|v\|_{\HH^{r}(\Omega)} .
\end{equation}
In fact, 
$$
\begin{aligned}
	\|A_h^{r/2}P_h v\| &=\sup_{\theta_h\in \vh}\frac{(A_h^{r/2} P_h v,\theta_h)}{\|\theta_h\|}\\
	&=\sup_{\theta_h\in \vh}\frac{A(P_h v,(A_h^*)^{r/2-1}\theta_h)}{\|\theta_h\|}
	=\sup_{\theta_h\in \vh}\frac{A(v,(A_h^*)^{r/2-1}\theta_h)}{\|\theta_h\|} .
\end{aligned}
$$
Then we let $\phi_h = (A_h^*)^{r/2-1}\theta_h$ and apply the discrete norm equivalence 
\eqref{i:discrete-equiv} for $A_h^*$ to yield
$$
\begin{aligned}
	\|A_h^{r/2}P_h v\| &=\sup_{\phi_h\in \vh}\frac{A(v,\phi_h)}{\|(A_h^*)^{1-r/2}\phi_h\|}
	\preceq \sup_{\phi_h\in \vh}\frac{A(v,\phi_h)}{\|\phi_h\|_{\HH^{1-r/2}(\Omega)}}\\
	&= \sup_{\phi_h\in \vh}\frac{(A^{r/2}v,(A^*)^{1-r/2}\phi_h)}{\|\phi_h\|_{\HH^{1-r/2}(\Omega)}}
	\le \sup_{\phi\in \HH^{1-r/2}(\Omega)}\frac{(A^{r/2}v,(A^*)^{1-r/2}\phi)}{\|\phi\|_{\HH^{1-r/2}(\Omega)}}\\
	&\preceq \|v\|_{\HH^r(\Omega)} .
\end{aligned}
$$

We also need the inverse estimate. Note that for $v_h\in \vh$,
$$
	\|A_h^{1/2}v_h\|\approx \|v_h\|_{H^1(\Omega)}\preceq h^{-1}\|v_h\|.
$$
This implies that for $s\in[0,1]$,
\begin{equation}\label{i:inverse-esti}
	\|A_h^{s/2}v_h\|\preceq h^{-s}\|v_h\|.
\end{equation}

Now, we invoke \eqref{i:riesz-stable} together with \eqref{i:l2-stable}, \eqref{h1-fe} 
and the inverse inequality \eqref{i:inverse-esti} to conclude that
$$
\begin{aligned}
	\|A_h^{r/2}\pi_h v\|&\le \|A_h^{r/2}P_h v\| +\|A_h^{r/2}(\pi_h-P_h)v\|\\
	&\preceq \|v\|_{\HH^{r}(\Omega)}+ h^{1-r} \|(\pi_h-P_h)v\|_{H^1(\Omega)}\\
	&\preceq \|v\|_{\HH^{r}(\Omega)}+ h^{1-r} \|(I-P_h)v\|_{H^1(\Omega)}\preceq \|v\|_{\HH^r(\Omega)}.	
\end{aligned}
$$
\end{proof}
\subsection{The error between $u$ and $u_{h,k}$.} \label{ss:uuhk}
In this section, we analyze the error between $u$ and $u_{h,k}$ measured
in the norm of $\HH^r(\Omega)$, for $r\in [0,1]$.

The following theorem provides conditions on $f$ which imply exponential
convergence of the 
  error
 $\|u_h-u_{h,k}\|_{\HH^r(\Omega)}$.
 
\begin{theorem}\label{l:expex} Let  $r$ be in $[0,1]$
 and $N$
and $M$ be as in \eqref{rem:MN}. 
Then for $\beta>r/2$,
$$
  \|u_h-u_{h,k}\|_{\HH^r(\Omega)} \preceq 
  e^{-\blue{\pi^2/(2k)}} \|f\|$$
and for $\beta\le r/2$ and $f\in \HH^{r-2\beta+\epsilon}(\Omega)$ 
satisfying $r-2\beta+\epsilon\in[0,1+\alpha]$,
$$
  \|u_h-u_{h,k}\|_{\HH^r(\Omega)} \preceq 
  e^{-\blue{\pi^2/(2k)}}\|f\|_{\HH^{r-2\beta+\epsilon}(\Omega)}.
$$
\end{theorem}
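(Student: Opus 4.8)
The plan is to reduce the finite-element (discrete) statement to the abstract sinc quadrature bound of Theorem~\ref{l:exp} applied to the discrete operator $A_h$, exploiting the discrete norm equivalences \eqref{i:discrete-equiv} (and its extension to $r\in(1,1+\alpha]$ through Lemma~\ref{l:pi-bound}) to translate between $\HH^r(\Omega)$ norms and powers of $A_h$. Concretely, since $u_h = A_h^{-\beta}\pi_h f$ and $u_{h,k} = \cQ_k^{-\beta}(A_h)\pi_h f$, we have $u_h - u_{h,k} = (A_h^{-\beta} - \cQ_k^{-\beta}(A_h))\pi_h f$, and \eqref{i:discrete-equiv} gives $\|u_h-u_{h,k}\|_{\HH^r(\Omega)} \approx \|A_h^{r/2}(A_h^{-\beta}-\cQ_k^{-\beta}(A_h))\pi_h f\|$ for $r\in[0,1]$. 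The point is that the constants $c_0,C_0,\omega$ governing $A_h$ on the finite-dimensional space $\vh$ are inherited from the coercivity and boundedness of $A(\cdot,\cdot)$ and are therefore independent of $h$, so Theorem~\ref{l:exp} applies uniformly in $h$.

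First I would handle the case $\beta > r/2$. Here I set $s = r/2$, $t = 0$, and note $-t = 0 \le s = r/2 < \beta$, so Theorem~\ref{l:exp} applies to $A_h$ with $f$ replaced by $\pi_h f$, giving
\[
\|(A_h^{-\beta}-\cQ_k^{-\beta}(A_h))\pi_h f\|_{D(A_h^{r/2})} \preceq C(k)\,\|\pi_h f\|,
\]
where with the balanced choice of $N,M$ from Remark~\ref{rem:MN} the bracket $C(k)$ is $\preceq e^{-\pi^2/(2k)}$. Since $\pi_h$ is an $L^2$-projection, $\|\pi_h f\| \le \|f\|$, and the discrete norm equivalence \eqref{i:discrete-equiv} converts the left side into $\|u_h-u_{h,k}\|_{\HH^r(\Omega)}$ (for $r=1$ one uses the $r=1$ case of the equivalence noted in the text). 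This yields the first displayed estimate.

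For the case $\beta \le r/2$ I would instead choose $s = r/2 - \beta + \epsilon/2 \ge 0$ and $t = \beta - s = r/2 - \epsilon/2$ — wait, I need $s < \beta$ and $s + t = r/2$; taking $s$ slightly negative is cleaner. So set $s = -(r/2-\beta) = \beta - r/2 \le 0$ and $t = r/2$, so that $s+t = \beta$... that is not $r/2$ either. The correct bookkeeping: I want the output norm to be $D(A_h^{r/2})$, i.e. $s+t = r/2$, with input regularity $D(A_h^t) = \HH^{2t}(\Omega)$ and the constraint $-t \le s < \beta$. Choosing $t$ with $2t = r - 2\beta + \epsilon$, i.e. $t = (r-2\beta+\epsilon)/2$, forces $s = r/2 - t = \beta - \epsilon/2 < \beta$, and $-t \le s$ holds since $t \ge 0$ (as $r - 2\beta + \epsilon \ge 0$ by hypothesis) and $s \ge 0$ when $\beta \ge \epsilon/2$ (choosing $\epsilon$ small). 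Theorem~\ref{l:exp} then gives
\[
\|(A_h^{-\beta}-\cQ_k^{-\beta}(A_h))\pi_h f\|_{D(A_h^{r/2})} \preceq e^{-\pi^2/(2k)}\,\|\pi_h f\|_{D(A_h^t)},
\]
and it remains to bound $\|\pi_h f\|_{D(A_h^t)} = \|A_h^t \pi_h f\| \approx \|\pi_h f\|_{\HH^{2t}(\Omega)}$ by $\|f\|_{\HH^{2t}(\Omega)} = \|f\|_{\HH^{r-2\beta+\epsilon}(\Omega)}$. For $2t \le 1$ this is exactly the stability \eqref{i:l2-stable_inter}; for $2t \in (1, 1+\alpha]$ it is precisely Lemma~\ref{l:pi-bound}. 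Combining with the discrete norm equivalence on the left gives the second estimate.

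The main obstacle — or rather the point requiring the most care — is the $h$-independence of all constants: one must check that the resolvent bounds \eqref{i:resolvent-L2}, \eqref{i:resolvent-H1} hold for $A_h$ with the same $c_0, C_0, \omega$ as for $A$, which follows because $A(\cdot,\cdot)$ restricted to $\vh \subset \VV$ retains the same coercivity and boundedness constants, and the rescaling \eqref{rescaleX} is compatible with $\vh$ inheriting the $Y$-norm. Given that, Theorem~\ref{l:exp} is genuinely uniform in $h$. A secondary subtlety is the boundary cases $r = 1$ and $2t = 1+\alpha$ in the norm equivalences, where one appeals to the endpoint results cited after \eqref{i:discrete-equiv} and to Lemma~\ref{l:pi-bound} respectively; and one should record that $\epsilon$ must be taken small enough that $\beta - \epsilon/2 \ge 0$ (equivalently $s \ge 0$), which is harmless since $\epsilon$ is at our disposal.
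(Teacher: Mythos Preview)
Your proposal is correct and follows essentially the same argument as the paper: apply the discrete norm equivalence \eqref{i:discrete-equiv}, then invoke Theorem~\ref{l:exp} for $A_h$ with $(s,t)=(r/2,0)$ when $\beta>r/2$ and with $(s,t)=(\beta-\epsilon/2,\,(r-2\beta+\epsilon)/2)$ when $\beta\le r/2$, finishing via \eqref{i:l2-stable_inter} or Lemma~\ref{l:pi-bound}. Your final caveat that one needs $\epsilon\le 2\beta$ is unnecessary: since $s+t=r/2\ge 0$, the constraint $-t\le s$ of Theorem~\ref{l:exp} is automatic regardless of the sign of $s$.
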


\begin{proof}  
Applying  \eqref{i:discrete-equiv}  gives
$$ \|u_h-u_{h,k}\|_{\HH^r(\Omega)} \preceq \| A_h^{r/2} (u_h-u_{h,k}) \| .$$
If $\beta>r/2$, we apply Theorem~\ref{l:exp} with $s=r/2$
  and $t=0$ to obtain 
$$
\| A_h^{r/2} (u_h-u_{h,k}) \| 
\preceq  
e^{-\blue{\pi^2/(2k)}} 
\|\pi_h f\| \preceq  
e^{-\blue{\pi^2/(2k)}} 
\|f\|.
$$
Alternatively, when $\beta\le r/2$, we note that $t:=(r+\epsilon)/2-\beta$ is in $[0,(1+\alpha)/2]$
and $s:=\beta-\epsilon/2< \beta$. Now,
applying Theorem~\ref{l:exp}, Lemma~\ref{l:pi-bound}, \eqref{i:discrete-equiv} and
\eqref{i:l2-stable_inter} gives
$$
\| A_h^{r/2} (u_h-u_{h,k}) \|
\preceq  
e^{-\blue{\pi^2/(2k)}} 
\|\pi_h f\|_{D(A_h^t)} \preceq  
e^{-\blue{\pi^2/(2k)}} 
\|f\|_{\HH^{2t}(\Omega)}. 
$$
Combining the above three inequalities completes the proof of the theorem.
\end{proof}

We next discuss the error bounds for $\|u-u_h\|_{\HH^r(\Omega)}$ given in
Theorem 6.2 of \cite{BP16} with $r\in [0,1]$.     
The proof given there uses both \eqref{h1-fe} and
\eqref{dual-fe} with of $r_1= \alpha$ and $r_2=
\min\{\alpha,1-r\}$ and results in an order of convergence 
 $0<2\alpha^*=r_1+r_2$ (with possibly a logarithm of $h^{-1}$
deterioration depending on the regularity of $f$).
  
Theorem 6.2 of \cite{BP16}  shows that:
\begin{enumerate}[Case 1:]
\item [Case~$1$:]
When $r/2+\alpha^*-\beta\ge 0$ and $f$  is in $D(A^{r/2+\alpha^*-\beta})$
$$
\bal
\|u-u_h\|_{\HH^r(\Omega)}&\preceq \log(h^{-1}) h^{2\alpha^*}
\|f\|_{D(A^{r/2+\alpha^*-\beta})}\\
&\preceq 
\log(h^{-1}) h^{2\alpha^*}
\|f\|_{\HH^{r+2\alpha^*-2\beta}(\Omega)}.
\eal
$$
\item  [Case~$2$:]
When $r/2+\alpha^*-\beta\ge 0$ and $f$ is in $D(A^{
  r/2+\alpha^*-\beta+\epsilon})$ with $r+2\alpha^*-2\beta+2\epsilon\le 1+\alpha$,
$$
\|u-u_h\|_{\HH^r(\Omega)}\preceq  h^{2\alpha^*}
\|f\|_{D(A^{r/2+\alpha^*-\beta+\epsilon})}\preceq 
h^{2\alpha^*}
\|f\|_{\HH^{r+2\alpha^*-2\beta+2\epsilon}(\Omega)}.
$$
\item  [Case~$3$:]
When $r/2+\alpha^*-\beta<0 $ and $f\in L^2(\Omega)$,
$$
\|u-u_h\|_{\HH^r(\Omega)}\preceq
h^{2\alpha^*} \|f\|.
$$
\end{enumerate}

The following theorem follows from combining the above results with
Theorem~\ref{l:expex}.

\begin{theorem} \label{t:combined} For $r\in [0,1]$, $k>0$ and $N$
and $M$ be as in \eqref{rem:MN}.
Then we have
$$
	\|u-u_{h,k}\|_{\HH^r(\Omega)} \preceq  
	\left\lbrace
	\begin{array}{ll}
	(\log(h^{-1})  h^{2\alpha_*}
          +e^{-\blue{\pi^2/(2k)}}) \|f\|_{\HH^{r+2\alpha^*-2\beta}(\Omega)} &  \textrm{in}\mathrm{~Case~1},\\
           ( h^{2\alpha_*}
          +e^{-\blue{\pi^2/(2k)}})\|f\|_{\HH^{r+2\alpha^*-2\beta+2\epsilon}(\Omega)} &  \textrm{in}\mathrm{~Case~2},\\
          ( h^{2\alpha_*}
          +e^{-\blue{\pi^2/(2k)}})\|f\| &  \textrm{in}\mathrm{~Case~3}.
          \end{array}\right.
$$
\end{theorem}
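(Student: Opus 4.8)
The plan is to prove Theorem~\ref{t:combined} by a straightforward triangle inequality, splitting the error $u - u_{h,k}$ into the finite element error $u - u_h$ and the quadrature error $u_h - u_{h,k}$, and then invoking the bounds already established earlier in the paper for each piece. Concretely, I would write
\[
\|u - u_{h,k}\|_{\HH^r(\Omega)} \le \|u - u_h\|_{\HH^r(\Omega)} + \|u_h - u_{h,k}\|_{\HH^r(\Omega)},
\]
and treat the three cases separately according to the sign of $r/2 + \alpha^* - \beta$, exactly as in the statement.

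In Case~1 ($r/2 + \alpha^* - \beta \ge 0$, $f \in D(A^{r/2+\alpha^*-\beta}) = \HH^{r+2\alpha^*-2\beta}(\Omega)$), the finite element term is bounded by $\log(h^{-1}) h^{2\alpha^*}\|f\|_{\HH^{r+2\alpha^*-2\beta}(\Omega)}$ using Case~1 of Theorem~6.2 of \cite{BP16}. For the quadrature term I would apply Theorem~\ref{l:expex}: if $\beta > r/2$ the bound is $e^{-\pi^2/(2k)}\|f\| \preceq e^{-\pi^2/(2k)}\|f\|_{\HH^{r+2\alpha^*-2\beta}(\Omega)}$ (noting $r+2\alpha^*-2\beta \ge 0$ here so the $L^2$ norm is controlled by the stronger norm); if $\beta \le r/2$ then one uses the second alternative in Theorem~\ref{l:expex} with the exponent $r - 2\beta + \epsilon$ — and here I must check that $r-2\beta+\epsilon \le r + 2\alpha^* - 2\beta$, i.e.\ that $\epsilon \le 2\alpha^*$, which holds for $\epsilon$ sufficiently small since $\alpha^* > 0$. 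Adding the two bounds and absorbing constants gives the claimed estimate. Cases~2 and~3 are handled identically, using the respective cases of Theorem~6.2 of \cite{BP16} for the finite element part, and the same application of Theorem~\ref{l:expex} for the quadrature part; in Case~3, where $f \in L^2(\Omega)$ only, the quadrature bound is $e^{-\pi^2/(2k)}\|f\|$ directly (one is necessarily in the regime $\beta > r/2$ since $r/2 + \alpha^* - \beta < 0$ forces $\beta > r/2 + \alpha^* > r/2$), matching the finite element bound's data norm.

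The only mild subtlety — and the one place the proof is not completely mechanical — is the bookkeeping that the regularity norm of $f$ required by the quadrature estimate is always dominated by the one required by the finite element estimate, so that a single norm of $f$ appears on the right-hand side in each case. This amounts to verifying the elementary inequalities between Sobolev exponents noted above ($\epsilon \le 2\alpha^*$ in Cases~1--2, and the implication $r/2+\alpha^*-\beta<0 \Rightarrow \beta > r/2$ in Case~3), together with checking that the hypotheses of Theorem~\ref{l:expex} are met — in particular that the exponent $r-2\beta+\epsilon$ lies in $[0,1+\alpha]$, which is guaranteed by the constraint $r + 2\alpha^* - 2\beta + 2\epsilon \le 1+\alpha$ imposed in Case~2 of \cite{BP16} (and is vacuous in Cases~1 and~3 after shrinking $\epsilon$). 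I expect no real obstacle here: once the case analysis and the exponent inequalities are in place, the result follows immediately by addition of the two already-proven estimates.
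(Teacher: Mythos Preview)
Your proposal is correct and follows essentially the same approach as the paper's proof: triangle inequality splitting into $u-u_h$ and $u_h-u_{h,k}$, invocation of the three cases from \cite{BP16} for the first term and Theorem~\ref{l:expex} for the second, together with the Sobolev embedding $\|f\|_{\HH^s(\Omega)}\le\|f\|_{\HH^t(\Omega)}$ for $s\le t$ to reconcile the data norms. The only cosmetic difference is that where you take $\epsilon$ small in the second alternative of Theorem~\ref{l:expex}, the paper simply sets $\epsilon=2\alpha^*$ there, landing directly on $\|f\|_{\HH^{r-2\beta+2\alpha^*}(\Omega)}$; both routes are valid and your verification that $\epsilon\le 2\alpha^*$ suffices (and that Case~3 forces $\beta>r/2$) matches the paper's reasoning.
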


\begin{proof}  
We set $C_1:= \|f\|_{\HH^{r+2\alpha^*-2\beta}(\Omega)}$, $C_2 = \|f\|_{\HH^{r+2\alpha^*-2\beta+2\epsilon}(\Omega)}$ and $C_3 = \|f\|$.
We note that
\beq
\|f\|_{\HH^s(\Omega)} \le 
\|f\|_{\HH^t(\Omega)}, \hbox { for } 0\le s\le t\le 2\hbox{ and } f\in
\HH^t(\Omega).
\label{hh-imb}
\eeq
This means that
$\|f\|\le C_j$, for $j=1,2,3$.  Thus, when $\beta>r/2$,  Theorem~\ref{l:expex} yields
$$	\|u_h-u_{h,k}\|_{\HH^r(\Omega)} \preceq C_j
e^{-\blue{\pi^2/(2k)}},\quad \hbox{ for }j=1,2,3.$$
When $\beta\le r/2$, we are in Cases~1 or 2 and Theorem~\ref{l:expex} with $\epsilon=2\alpha^*$ together with \eqref{hh-imb} imply
$$
\|u_h-u_{h,k}\|_{\HH^r(\Omega)} 
\le e^{-\blue{\pi^2/(2k)}} \|f\|_{\HH^{r-2\beta+2\alpha^*}}\le
C_je^{-\blue{\pi^2/(2k)}},
$$
for $j=1$ or $j=2$.
The desired result follows from the above estimates, the bounds for
$\|u-u_h\|_{\HH^r(\Omega)}$ above and the triangle inequality.
\end{proof}

\section{Numerical Illustration.}\label{s:numerical}
In this section, we present some numerical experiments to illustrate the error estimates derived in Section~\ref{s:SINC}.
In order for the error due to the finite element approximation to shadow the exponentially converging sinc quadrature error, we consider the following one dimensional problem:
\begin{equation}\label{e:oned-problem}
\begin{aligned}
	A^\beta u &= 1, \quad\text{ in } (0,1),\\
	u(0) &= u(1) = 0,
\end{aligned}
\end{equation}
where $A$ is an unbounded operator associated with the bilinear form 
$$A(u,v)=\int_0^1 u'v'\, dx \qquad\text{for }u,v\in H^1_0(0,1)$$
with $H^1_0(0,1):=\{v\in H^1(0,1) : v(0)=v(1)=0\}$.
In particular, we provide numerical evidence of exponential rates not explained by the previous theory in  \cite{BP16} but by Theorem~\ref{l:exp}. 
Finally, we refer to \cite{BP:13,BP16} for numerical experiments with domains $\Omega \subset \mathbb R^d$, $d>1$, and general operators $A$.

\subsection{Error from the Sinc Approximation}
We first report the sinc approximation error $\|(A_h^{-\beta}-Q_k^{-\beta}(A_h))\pi_h f\|_{\HH^r(\Omega)}$ for $r\in [0,1]$. 
To this end, we consider a subdivision $\cT_{h}$ made of uniform intervals of length $h=1/512$. The finite element approximation
of $u$ in \eqref{e:oned-problem} is given by
\begin{equation}\label{e:numerical-uh}
	u_{h} = A_h^{-\beta}\pi_h f =  \sum_{\ell=1}^{D_{h}} \lambda_{\ell,h}^{-\beta}  (1,\psi_{\ell,h})  \psi_{\ell,h},
\end{equation}
where the number of degrees of freedom $D_h=511$ and $\{\psi_{\ell,h},\lambda_{\ell,h}\}$ 
are the eigenpairs of $A_h$, i.e.
$$
	\lambda_{\ell,h}= \frac{6(1-\cos(k\pi h))}{h^2 (2+\cos(k \pi h))}
	\ \text{ and }\ 
	\psi_{\ell,h} =\sqrt{\frac{6}{2+\cos(hl\pi)}}  \sum_{k=1}^{D_{h}}\sin( h \ell k \pi) \varphi_{k,h}.
$$
Similarly, the sinc approximation of $u_h$ is given by
\begin{equation}\label{e:numerical-Uh}
	u_{h,k} = Q_k^{-\beta}(A_h)\pi_h f =\sum_{\ell=1}^{D_{h}} \cQ_k^{-\beta}(\lambda_{\ell,h}) (1,\psi_{\ell,h})  \psi_{\ell,h}.
\end{equation}

\blue{
We report the errors in the discrete operator norm (see,
\eqref{i:discrete-equiv}), namely 
\beq
 e(k,r):=\|u_h-u_{h,k}\|_{D(A_h^{r/2})} 
	= \left(\sum_{l=1}^{D_h} \lambda_{\ell,h}^r |(u_h-u_{h,k},\psi_{\ell,h})|^2\right)^{1/2}.
\label{e:log-error}
\eeq
Theorem~\ref{l:exp} guarantees that $e(k,r) \leq Ce^{-c/k}$ for some constants $c$ 
and $C$ independent of $k$ and $h$.  To illustrate this behavior, we
provide  semi-log plots of the error as a function of $1/k$ so that
$e^{-c/k}$ ends up being a straight line with slope $-c$. 
Figure~\ref{f:sinc} reports the values of $e(k,r)$ for $\beta
=0.3,0.5,0.7$ and $r=0,\beta,1$.   In this case, $M$ and $N$ are chosen
to balance the three error terms coming from the sinc quadrature
analysis (see, Remark~\ref{rem:MN}). 
The results are close to straight lines 
which is  consistent with the $Ce^{-c/k}$ error behavior of the theory.  
Since $1 \in H^{1/2-\epsilon}(\Omega)$ for any $\epsilon>0$, the
exponential rate observed for $r=1$ is not explained by the previous
theory in  \cite{BP16} but results from  Theorem~\ref{l:exp} above.   
}

\blue{To illustrate the benefit in choosing the balanced scheme, we provide
similar results in Figure~\ref{f:sinc-old} for  $M=N = 1/k^2$.
Although, both strategies yield exponential decay,
the balanced scheme is dramatically more efficient.}

\begin{figure}[hbt!]
	\begin{center}
		\begin{tabular}{lll}
			\!\!\!\!\!\!\!
			\includegraphics[scale=.24]{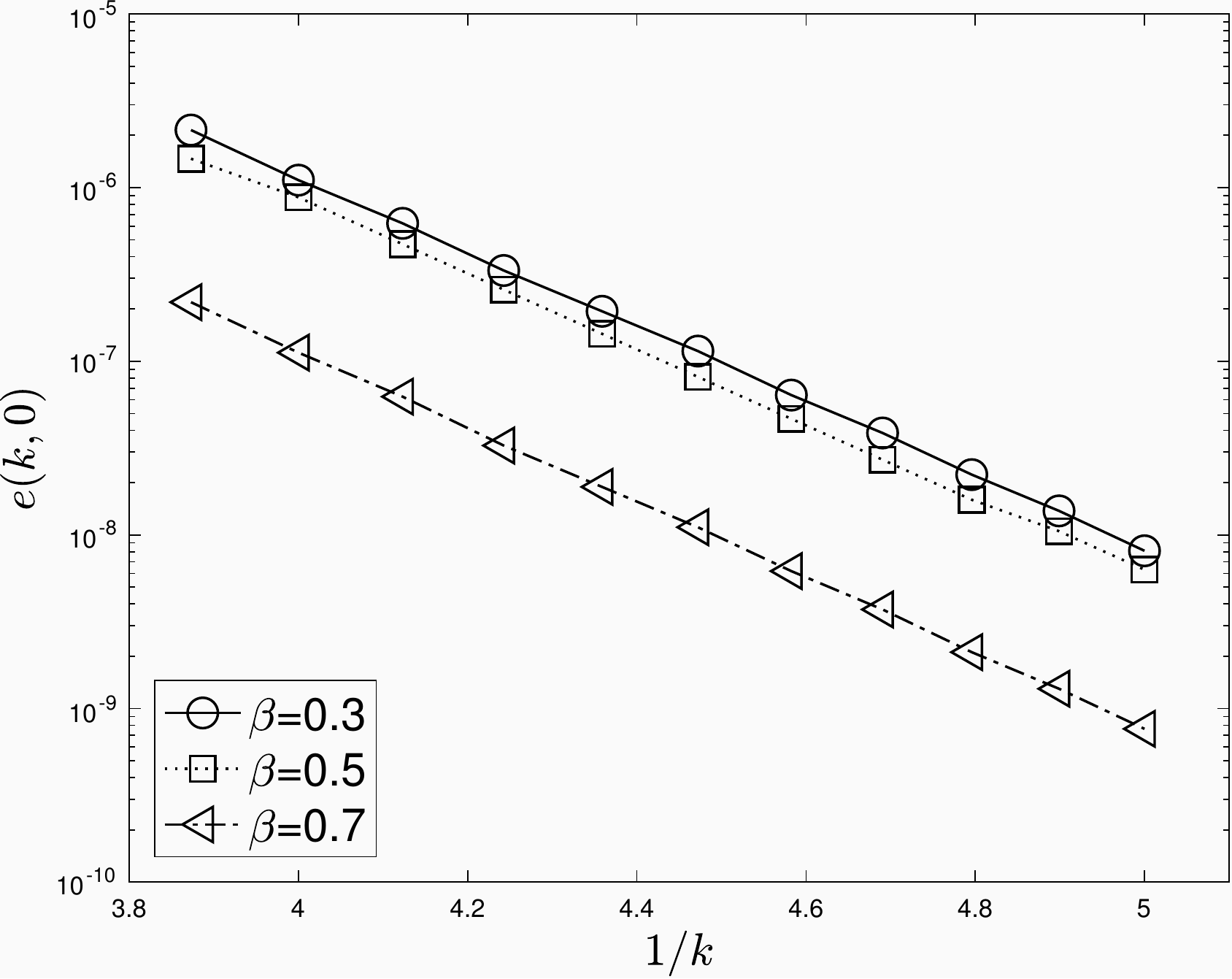} & 
			\!\!\!\!\!\!\!
			\includegraphics[scale=.24]{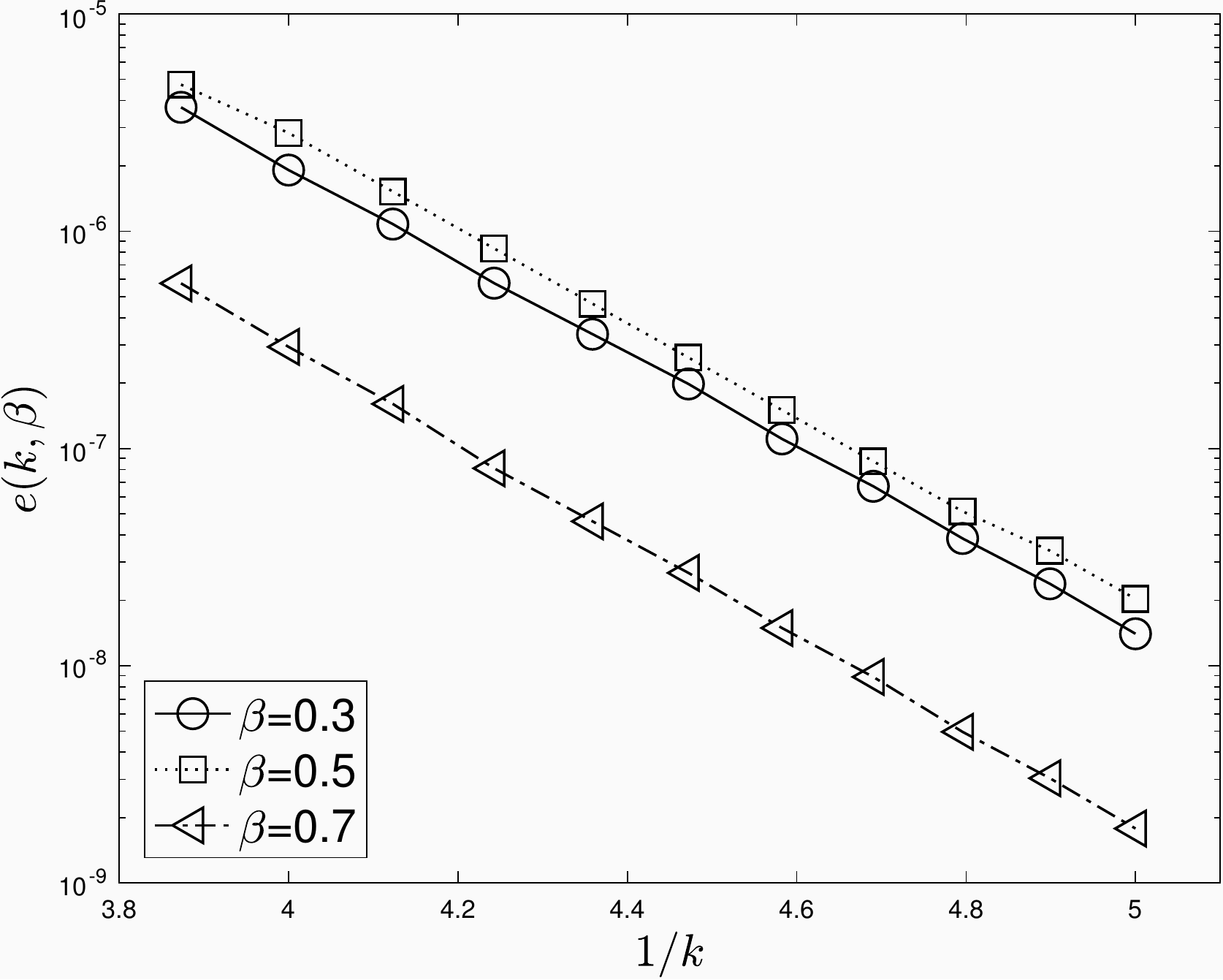}
			\!\!\!\!\!\!\!
			&\includegraphics[scale=.24]{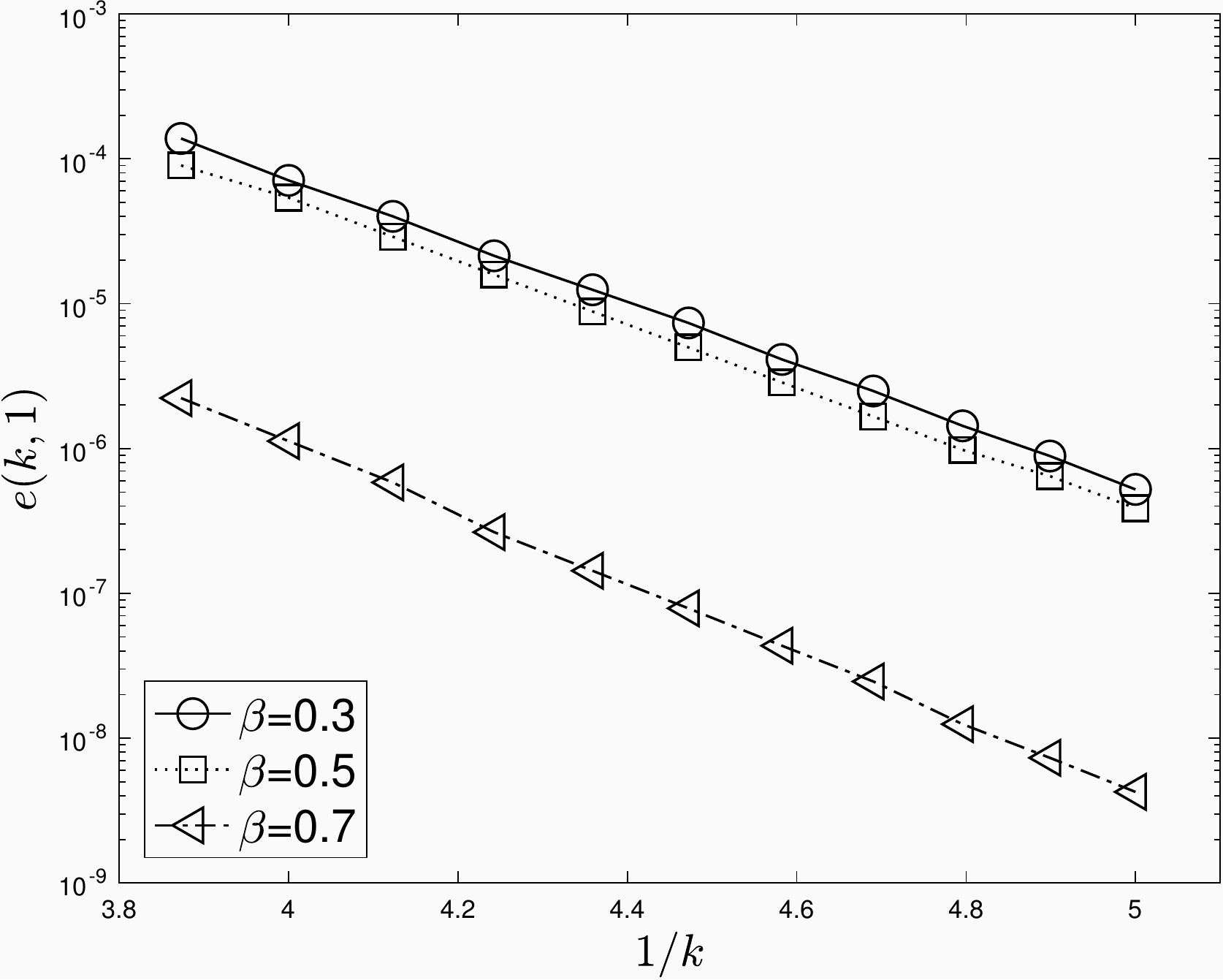}\\
    		\end{tabular}
 	\end{center}
    	\caption{\blue{Values of $e(k,r)$ defined by \eqref{e:log-error} for different sinc quadrature spacing $k$
	and $r=0$ (left), $r=\beta$ (middle) and $r=1$ (right). 
	Here $M$ and $N$ are chosen to balance the three error terms
        coming from the sinc quadrature $\HH^r(\Omega)$ for a given sinc
        quadrature spacing $k$ (see, Remark~\ref{rem:MN})}.
	}
    	\label{f:sinc}
\end{figure}

\begin{figure}[hbt!]
	\begin{center}
		\begin{tabular}{lll}
			\!\!\!\!\!\!\!
			\includegraphics[scale=.24]{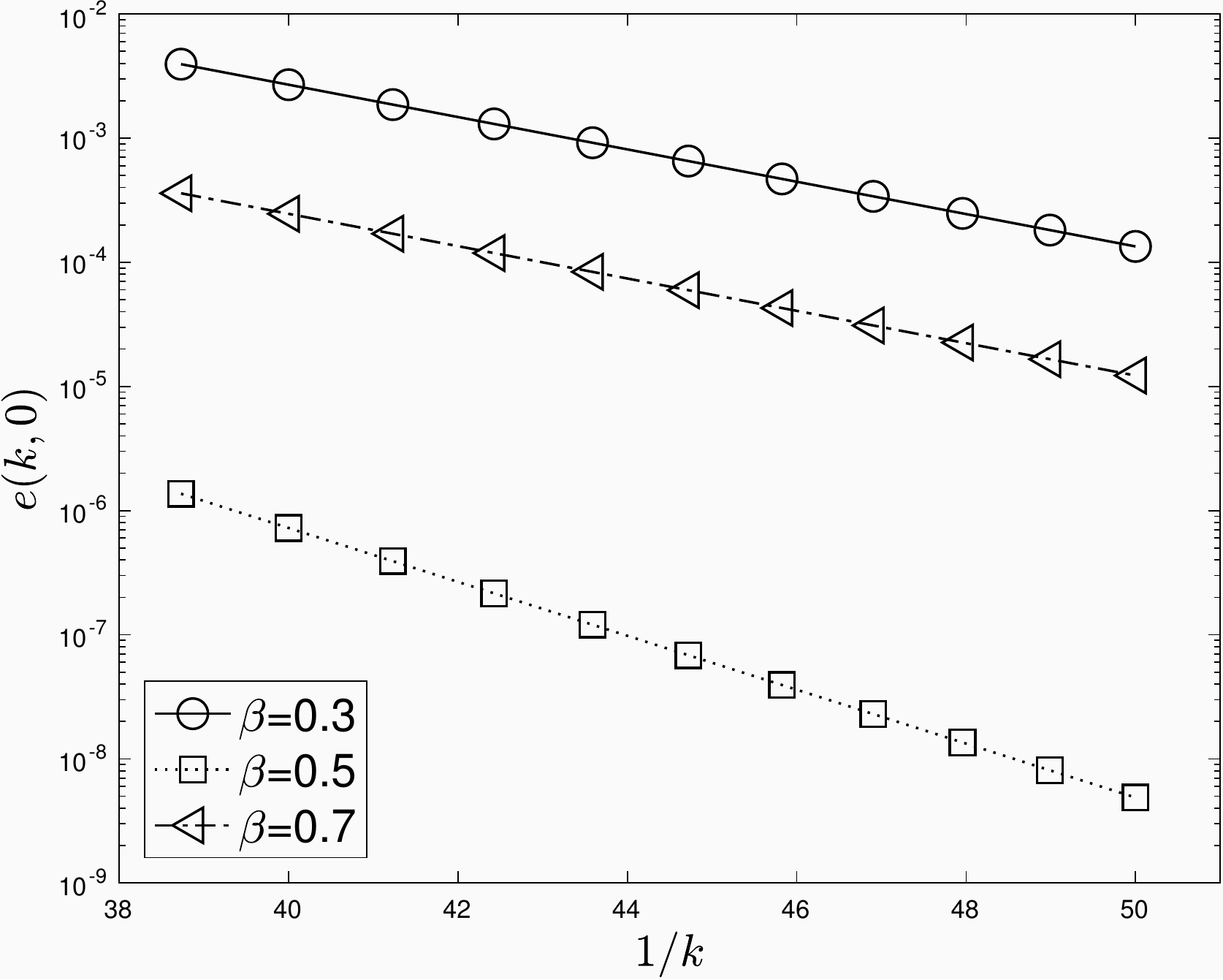} & 
			\!\!\!\!\!\!\!
			\includegraphics[scale=.24]{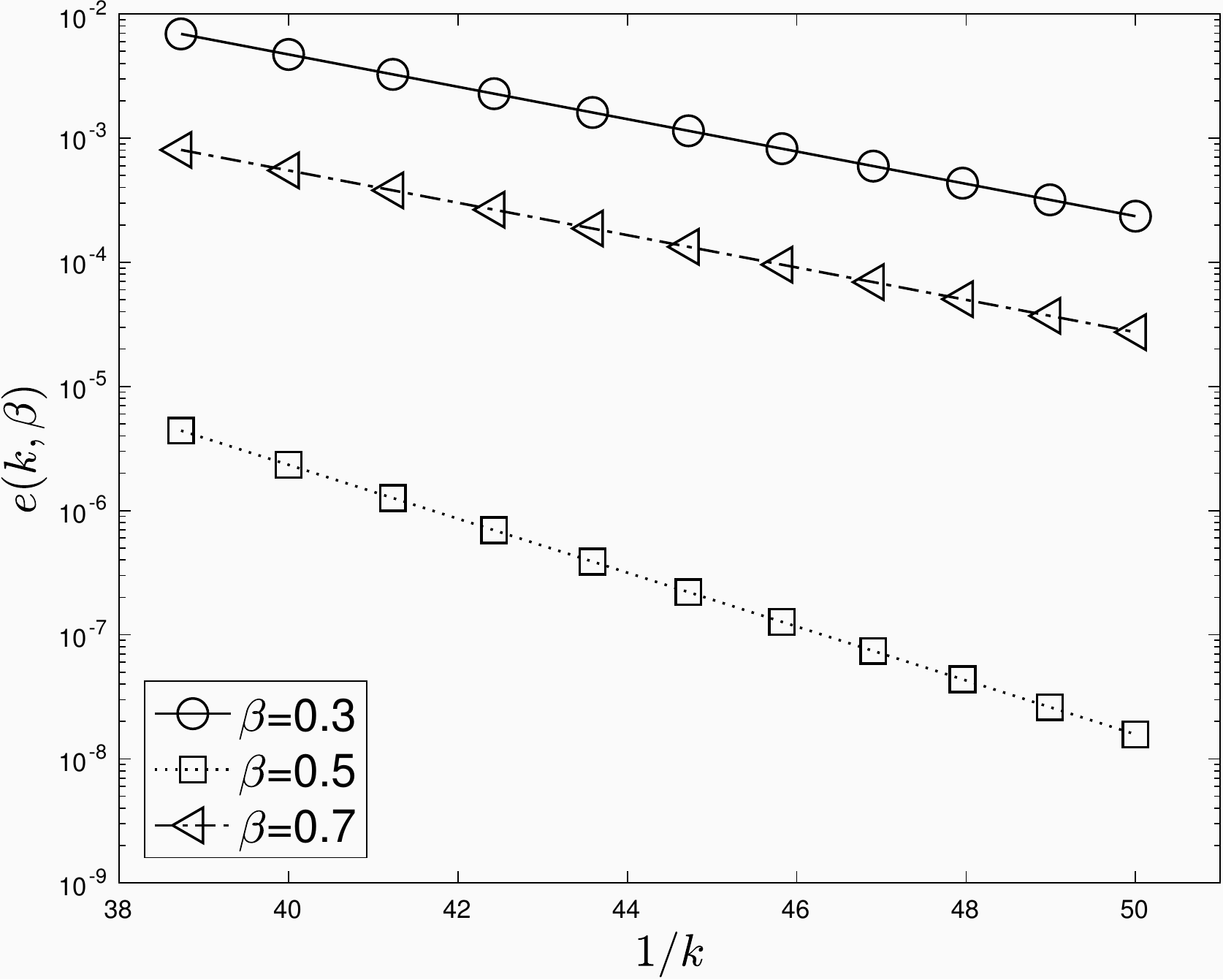}
			\!\!\!\!\!\!\!
			&\includegraphics[scale=.24]{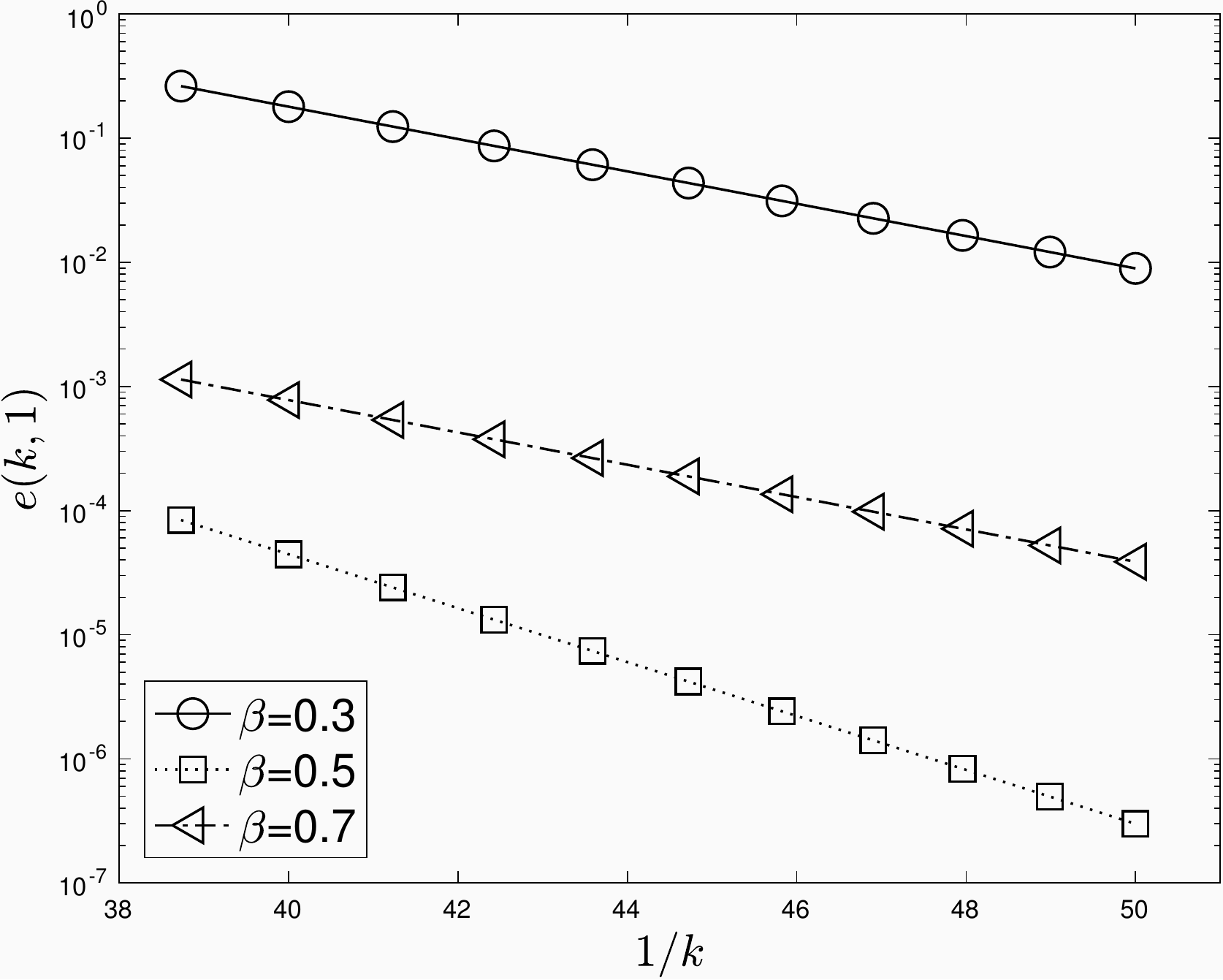}\\
    		\end{tabular}
 	\end{center}
    	\caption{\blue{Values of $e(k,r)$ defined by \eqref{e:log-error} for different sinc quadrature spacing $k$
	and $r=0$ (left), $r=\beta$ (middle) and $r=1$ (right). In
        contrast with the experiments provided in Figure~\ref{f:sinc},
        here  $M=N=1/k^2$.  }}
    	\label{f:sinc-old}
\end{figure}

\subsection{Total Error}
We compare the solution $u$ in \eqref{e:oned-problem} and its fully discrete approximation
$u_{h,k}$ given by \eqref{e:numerical-Uh}.
The sequence of meshes are obtained upon performing  successive uniform refinements of the unit interval, thereby leading to mesh sizes $h_j=2^{-j}$ for $j=3,\ldots,8$. 

Since $1 \in H^{1/2 - \epsilon}(\Omega)$ for any $\epsilon>0$, 
the predicted rate of convergence (up to a logarithmic term) is described by
\begin{equation}\label{i:err-l2}
	\|u-u_{h,k}\| \preceq h^{\max\{2,2\beta+1/2\}} 
\end{equation}
upon setting 
$$
k = \frac{1}{({8(2\beta+1/2)\log(1/h)}}
$$
and by 
\begin{equation}\label{i:err-h1}
	\|u-u_{h,k}\|_{H^1(\Omega)} \preceq  h^{\max\{2,2\beta+1/2\}-1} 
\end{equation}
with the choice
$$
k = \frac{1}{({4(2\beta-1/2)\log(1/h)})}.
$$
We note that the constant $8$ and $4$ appearing in the choices of $k$ are tuned so that the error of the sinc quadrature is already in its asymptotic regime for small $k$.
As already noted (see Figure ~\ref{f:sinc} and \ref{f:sinc-old}), this choice is not necessary for the algorithm to work. 

Also, the solution $u$ is not known exactly and is therefore approximated by truncating its eigenfunction expansion
\begin{equation}\label{e:u_exact_1d}
u \approx 2\sum_{\ell=1}^{50000}  (\pi\ell)^{-\beta} \frac{1-(-1)^\ell}{\pi \ell}  \sin(\pi \ell x).
\end{equation}
The errors between $u$ and $u_{h,k}$ are
reported in Figure~\ref{f:err} and matches the predictions of Theroem~\ref{t:combined}.

\begin{figure}[hbt!]
 \begin{center}
    \begin{tabular}{cc}
\includegraphics[scale=.31]{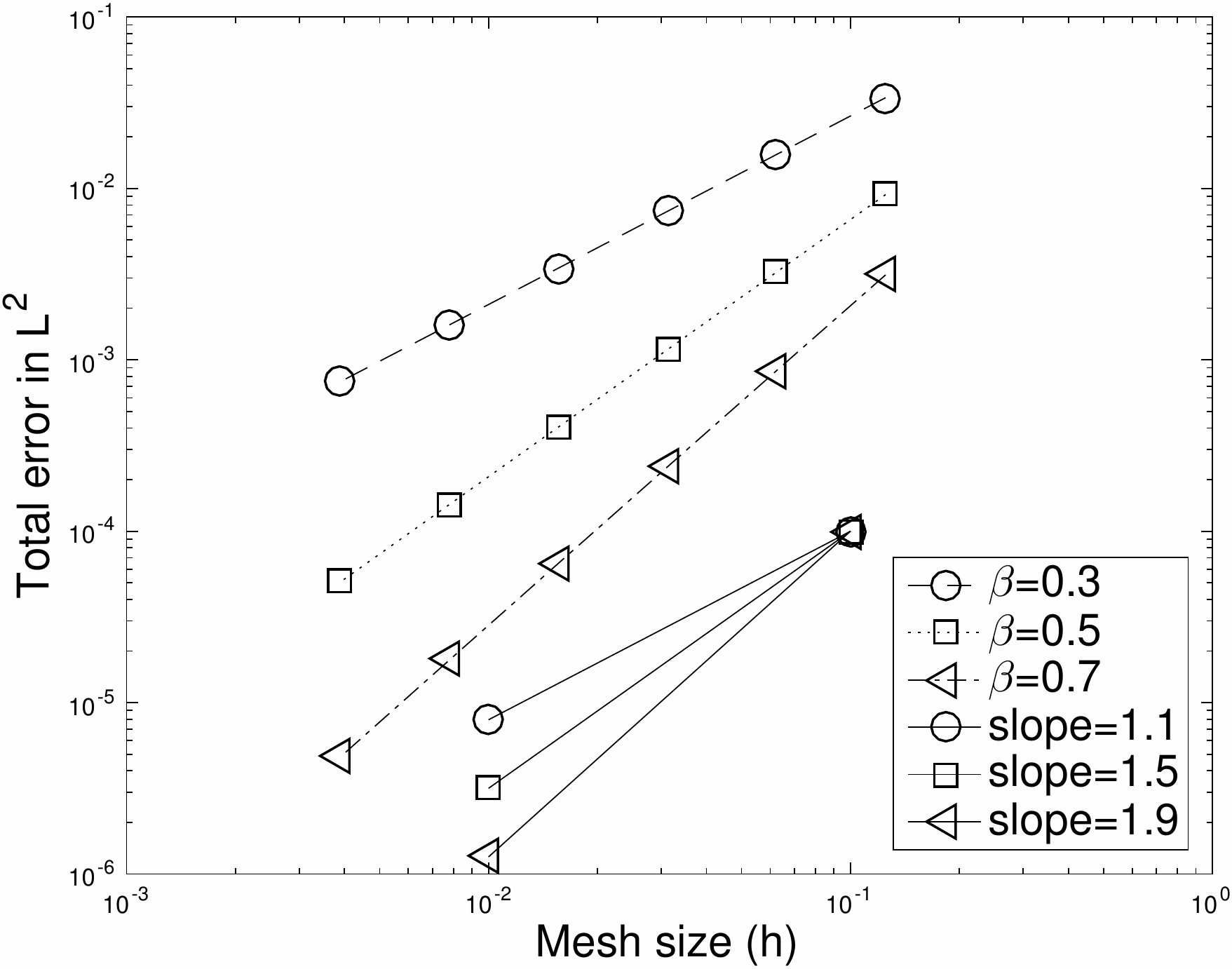} & \includegraphics[scale=.31]{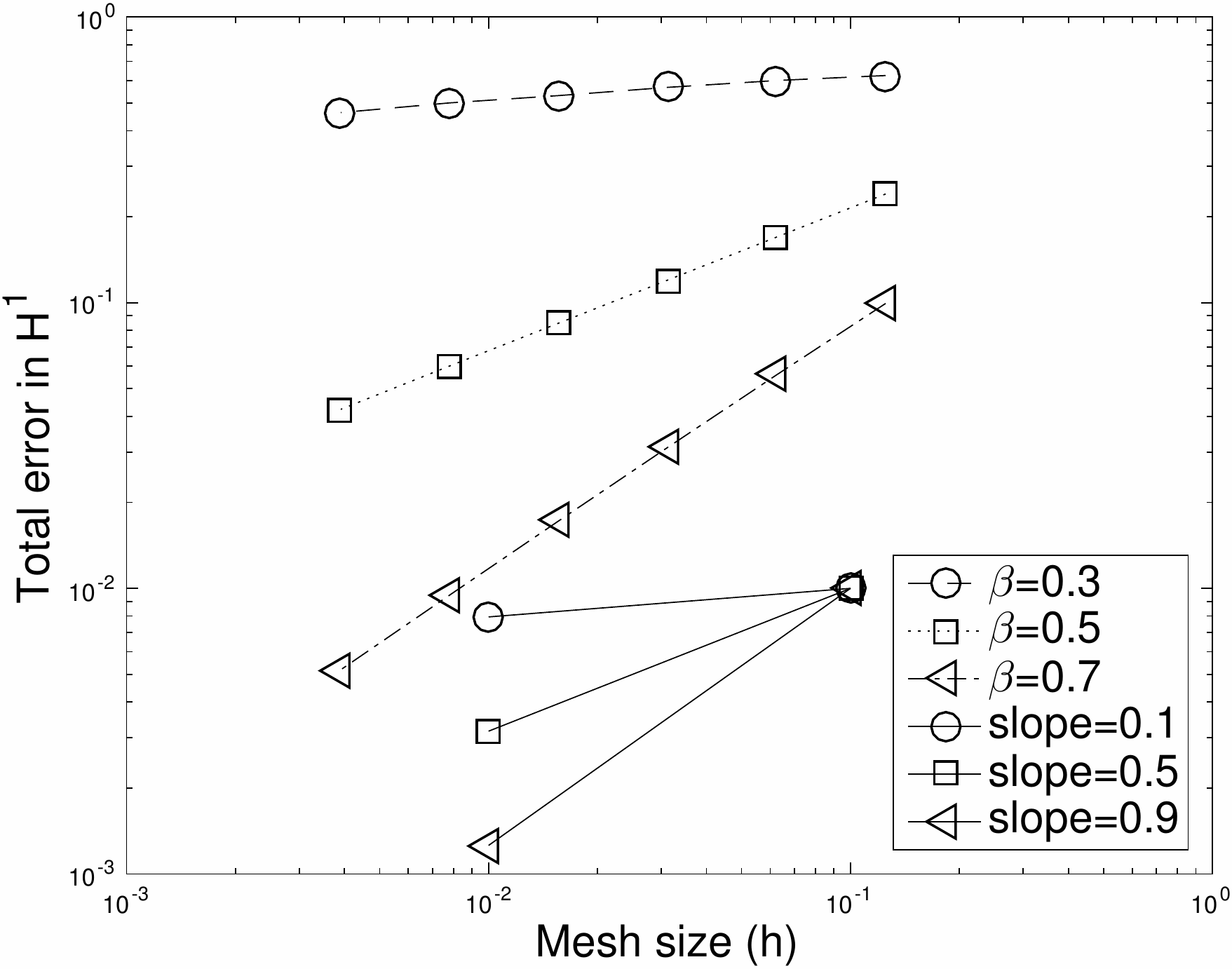}\\
    \end{tabular}
 \end{center}
    \caption{Errors between $u$ and $u_{h,k}$ in the $L^2(\Omega)$ norm (left)  and the $H^1(\Omega)$ 
    norm (right) versus the mesh size for $\beta = 0.3, 0.5, 0.7$.
    The rates of convergences predicted by \eqref{i:err-l2} and \eqref{i:err-h1} are observed numerically.}
    \label{f:err}
\end{figure}

\end{document}